\theoremstyle{plain}
\newtheorem*{theorem*}{Theorem}
\newtheorem{theorem}{Theorem}[section]
\crefname{theorem}{Theorem}{Theorems}
\Crefname{theorem}{Theorem}{Theorems}
\newtheorem*{lemma*}{Lemma}
\newtheorem{lemma}[theorem]{Lemma}
\crefname{lemma}{Lemma}{Lemmas}
\Crefname{lemma}{Lemma}{Lemmas}
\newtheorem*{claim*}{Claim}
\newtheorem{claim}[theorem]{Claim}
\crefname{claim}{Claim}{Claims}
\Crefname{claim}{Claim}{Claims}
\crefname{proposition}{Proposition}{Propositions}
\Crefname{proposition}{Proposition}{Propositions}
\newtheorem{corollary}[theorem]{Corollary}
\crefname{corollary}{corollary}{Corollaries}
\Crefname{corollary}{Corollary}{Corollaries}
\newtheorem{conjecture}[theorem]{Conjecture}
\crefname{conjecture}{Conjecture}{Conjectures}
\Crefname{conjecture}{Conjecture}{Conjectures}
\newtheorem{question}[theorem]{Question}
\crefname{question}{Question}{Questions}
\Crefname{question}{Question}{Questions}
\newtheorem{observation}[theorem]{Observation}
\crefname{observation}{Observation}{Observations}
\Crefname{observation}{Observation}{Observations}
\crefname{example}{Example}{Examples}
\Crefname{example}{Example}{Examples}
\theoremstyle{definition}
\crefname{problem}{Problem}{Problems}
\Crefname{problem}{Problem}{Problems}
\newtheorem{definition}[theorem]{Definition}
\crefname{definition}{Definition}{Definitions}
\Crefname{definition}{Definition}{Definitions}
\theoremstyle{remark}
\crefname{remark}{Remark}{Remarks}
\Crefname{remark}{Remark}{Remarks}
\xpatchcmd{\proof}{\itshape}{\normalfont\proofnamefont}{}{}
\newcommand{\proofnamefont}{}
\renewcommand{\proofnamefont}{\bfseries}
\newcommand{\remove}[1]{}
\newcommand{\ceil}[1]{
    \lceil #1 \rceil
}
\renewcommand{\angle}[1]{
    \langle #1 \rangle
}
\newcommand{\msubseteq}{\:\widetilde{\subseteq}\:}
\newcommand{\mcup}{\,\widetilde{\cup}\,}
\newcommand{\bigmcup}{\widetilde{\bigcup}}
\newcommand{\mcap}{\,\widetilde{\cap}\,}
\newcommand{\mminus}{-}
\newcommand{\G}{\Gamma}
\newcommand{\cG}{\mathcal{G}}
\newcommand{\cI}{\mathcal{I}}
\newcommand{\cX}{\mathcal{X}}
\newcommand{\cY}{\mathcal{Y}}
\newcommand{\Z}{\mathbb{Z}}
\newcommand{\F}{\mathbb{F}}
\DeclareMathOperator{\spa}{span}
\DeclareMathOperator{\rank}{rank}
\title{Directed cycles with zero weight in $\mathbb{Z}_p^k$}
\author{
	    Shoham Letzter\thanks{
		Department of Mathematics, 
		University College London, 
		Gower Street, London WC1E~6BT, UK. 
		Email: \texttt{s.letzter}@\texttt{ucl.ac.uk}. 
		Research supported by the Royal Society.
    }
    \and 
        Natasha Morrison\thanks{Department of Mathematics and Mathematical Statistics, University of Victoria, 3800 Finnerty Road, Victoria, BC V8P 5C2, Canada.
        Email: \texttt{nmorrison}@\texttt{uvic.ca}.
        Research supported by NSERC Discovery Grant RGPIN-2021-02511.}
}
\begin{document}

\date{\today}
\maketitle

\begin{abstract}
	\setlength{\parskip}{\medskipamount}
    \setlength{\parindent}{0pt}
    \noindent

	For a finite abelian group $A$,  define $f(A)$ to be the minimum integer such that for every complete digraph $\Gamma$ on $f$ vertices and every map $w:E(\Gamma) \rightarrow A$, there exists a directed cycle $C$ in $\Gamma$ such that $\sum_{e \in E(C)}w(e) = 0$. The study of $f(A)$ was initiated by Alon and Krivelevich (2021). In this article, we prove that $f(\Z_p^k) = O(pk (\log k)^2)$, where $p$ is prime, with an improved bound of $O(k \log k)$ when $p = 2$. These bounds are tight up to a factor which is polylogarithmic in $k$. 

\end{abstract}
\section{Introduction} \label{sec:intro}

	Generally speaking, the area of `zero-sum' Ramsey theory concerns the study of objects weighted by the elements of a group, and when a substructure of total weight zero can be found. This dates back to 1960 and the celebrated Erd\H{o}s--Ginzburg--Ziv~\cite{egz} theorem which asserts that, for every integer $m \ge 2$, every sequence of $2m-1$ elements in $\Z_m$ contains a subsequence of $m$ elements that sum to $0$. Since then, a wide variety of interesting variations have been studied; see Caro \cite{caro1996zero} for a survey of this topic. 

	For a finite abelian group $A$,  define $f(A)$ to be the minimum integer such that for every complete digraph $\Gamma$ on $f$ vertices and every map $w:E(\Gamma) \rightarrow A$, there exists a directed cycle $C$ in $\Gamma$ such that $\sum_{e \in E(C)}w(e) = 0$. We call $w$ a \emph{weighting} of $E(\Gamma)$ and $C$ a $w$-\emph{zero-sum cycle}. When it is clear which map $w$ is being referred to, we will simply call $C$ a \emph{zero-sum cycle}. The question of determining $f(A)$ arose in a paper by Alon and Krivelevich \cite{alon2020divisible} from 2021, who showed that for any integer $q$, if $f = f(\Z_q)$, then every $K_{2f}$-minor contains a cycle whose length is divisible by $q$ (see the end of Section 3 in \cite{alon2020divisible}). In fact, they proved more generally that for every subcubic graph $H$ and positive integer $q$, there is a (finite) number $g = g(H, q)$ such that every $K_g$-minor contains a subdivision of $H$ where each edge is replaced by a path of length which is divisible by $q$. Their bound on $g(H,q)$ was improved by Das, Dragani\'c, and Steiner \cite{das2021tight} who showed that $g(H, q) = O(|H|q)$, which is tight up to a constant factor.

	Alon and Krivelevich~\cite{alon2020divisible} proved that  $f(\Z_p) \le 2p-1$, for $p$ prime, and $f(\Z_q) = O(q \log q)$, for any integer $q \ge 2$. This result was improved upon and generalised by M\'{e}sz\'{a}ros and Steiner~\cite{MS21}, who showed that $f(A) \le 8|A|$ for any finite abelian group $A$, and in particular, that $f(\mathbb{Z}_p) \le \frac{3}{2}p$ for prime $p$. Recently, this was improved upon by Berendsohn, Boyadzhiyska, and Kozma~\cite{BBK22}, and independently by Akrami, Chaudhury, Garg, Mehlhorn, and Mehta \cite{akrami2022efx}, who gave a beautifully slick proof to show that $f(B) \le 2|B| - 1$, where $B$ is any finite (not necessarily abelian) group. In forthcoming work of Campbell, Hendrey, Gollin, and Steiner, this is improved to a tight bound $f(\Z_q) = q+1$ for every positive integer $q$.

    Our main result improves upon the results in \cite{MS21,BBK22,akrami2022efx} to give a sublinear bound in $|A|$ when $A = \mathbb{Z}_p^k$ and $p$ is a prime. 
	\begin{restatable}{theorem}{main}
		\label{thm:main}
		Let $p$ be a prime and let $k \ge 1$ be an integer. Then $$f(\Z_p^k) \le 600p \cdot k(\log_2 (10k))^2.$$
	\end{restatable}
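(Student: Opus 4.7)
The plan is to prove the theorem by induction on $k$, with base case $k=1$ handled by the known bound $f(\Z_p) \le 2p-1$. I would aim to establish the recurrence
\[
f(\Z_p^k) \;\le\; f(\Z_p^{k-1}) + c\,p\,(\log_2 (10k))^2
\]
for a suitable constant $c$; summing this from $1$ to $k$ then yields the stated bound (with constant $600$ chosen to absorb all overhead). For the inductive step, given a complete digraph $\Gamma$ on $n$ vertices with weighting $w : E(\Gamma) \to \Z_p^k$, I split $V(\Gamma)$ into a \emph{pool} $U$ of size $f(\Z_p^{k-1})$ and a \emph{workspace} $W$ of size $c p (\log 10k)^2$.

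Fix a coordinate projection $\pi : \Z_p^k \to \Z_p$, and write $H = \ker(\pi) \cong \Z_p^{k-1}$. Within $W$, I use the base-case bound $f(\Z_p) = O(p)$ repeatedly: on each successive batch of $O(p)$ unused vertices of $W$ I apply it to the composed weighting $\pi \circ w : E \to \Z_p$, producing a short directed cycle of length $O(p)$ whose $\pi$-image is zero, hence whose $w$-weight lies entirely in $H$. Doing this greedily yields $\Theta((\log_2 10k)^2)$ vertex-disjoint short cycles $C_1, C_2, \dots$ inside $W$, with weights $w(C_i) \in H$.

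The crux is then to combine an appropriate subcollection of the $C_i$'s with an inductively-found structure on $U$ into a single zero-sum directed cycle. The idea is to use the inductive hypothesis on $U$ (treating $H \cong \Z_p^{k-1}$ as the target group) to find a cycle $C^* \subseteq U$ with $w$-weight in the one-dimensional complement of $H$; simultaneously, via an Erd\H{o}s--Ginzburg--Ziv-type argument applied to the weights $w(C_i) \in H$, I select a subcollection summing to a prescribed value in $H$. The short cycles and $C^*$ are then spliced together into one cycle using connecting arcs drawn from the still-unused portion of $W$. The factor $(\log_2 10k)^2$ in the workspace decomposes as one $\log k$ for the number of short cycles extracted and one $\log k$ for the overhead of a nested base-case application used to control the residual weight from the connecting arcs.

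The main obstacle I anticipate is precisely this splicing step: merging $C^*$ with the selected $C_i$'s into a single directed cycle while guaranteeing that the total weight in $\Z_p^k$ is exactly zero. Both the $\pi$-component (affected by $C^*$ and the connecting arcs) and the $H$-component (affected by the chosen $C_i$'s and the connecting arcs) must vanish simultaneously. My strategy is to exploit the flexibility of having many candidate short cycles $C_i$ and many candidate connecting arcs: choosing which cycles to include via the Davenport constant of $\Z_p^{k-1}$, and choosing connecting arcs via a second, nested application of the base case in $\Z_p$. Carefully tracking the vertex budget through this two-level recursion — with enough slack at each level to run the base case once more — is what pins down the explicit constant $600$ and produces exactly the quoted bound.
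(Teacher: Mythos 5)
Your plan has a genuine gap, and it sits exactly where you flag it: the splicing step is not a technical nuisance to be absorbed into the constant, it is the place where the whole approach breaks. Two concrete problems. First, a counting obstruction: to select a subcollection of the weights $w(C_i) \in H \cong \Z_p^{k-1}$ summing to a \emph{prescribed} element of $H$ (prescribed by the connecting arcs, hence not under your control), you need the sumset of $\{w(C_i)\}_i$ to cover essentially all of $H$. A multiset of $s$ elements has sumset of size at most $2^s$, so $s = \Theta((\log_2 10k)^2)$ cycles give a sumset of size at most $k^{O(\log k)} \ll p^{k-1}$; in fact one needs at least $(p-1)(k-1)$ elements, and each short cycle costs $\Theta(p)$ workspace vertices, so the additive cost per induction step is $\Omega(p^2 k)$, which sums to $\Omega(p^2 k^2)$ --- the regime of the Sidorenko--Steiner bound mentioned in the introduction, not the claimed $O(pk(\log k)^2)$. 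Second, and more structurally: vertex-disjoint cycles cannot be combined by subset selection the way subsequences are in Erd\H{o}s--Ginzburg--Ziv. Once you break each $C_i$ open to thread it into a single closed walk, the relevant quantity is the weight of the path from its entry to its exit vertex, not $w(C_i)$, and including or excluding $C_i$ changes which connecting arcs you use, so the contributions are not independent binary choices. The connecting arcs themselves carry arbitrary weights in $\Z_p^k$, and a nested base-case application of $f(\Z_p)$ controls only their $\pi$-coordinate, leaving their $(k-1)$-dimensional $H$-component uncontrolled.

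The paper's key idea, which is precisely the missing ingredient, is to replace short cycles by \emph{gadgets}: pairs of directed paths $P,Q$ with the same endpoints, whose value is $w(Q)-w(P)$. Threading all gadgets into one fixed closed cycle and then choosing $P$ or $Q$ at each gadget gives genuinely independent toggles whose effect on the total weight is exactly a subset sum of the values (Lemma~\ref{lem:sigmall}); the connecting arcs are fixed once and absorbed into the target of the subset-sum problem. The paper then avoids the $\Omega(pk)$-elements-to-span-the-group cost per step by not peeling off one coordinate at a time: it extracts a lexicographically extremal sequence of gadget families whose value-sumsets stabilise to a decreasing chain of subgroups $B_i$, finds an index $m$ where the codimension grows by at most a factor of $10$ over two steps, deletes a set $Z$ of only $O(\log k \cdot h_p(k-d_m))$ vertices, and recurses into the subgroup $B_{m-2}$; the recursion closes via the superadditivity $h_p(a)+h_p(b) \le h_p(a+b)$ together with the bound $h_p(k) = O(pk\log k)$ on reduced multisets (Theorem~\ref{thm:reduced-upper}). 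Your coordinate-projection recurrence cannot be repaired to give the stated bound without some analogue of both of these devices.
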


	We obtain a stronger result when $p = 2$, which is tight up to an $O(\log k)$ factor. 
	\begin{restatable}{theorem}{pistwo}
		\label{thm:pis2}
		Let $k \ge 2$ be an integer. Then
		$$f(\Z_2^k) \le 600 k \log_2(2k).$$
	\end{restatable}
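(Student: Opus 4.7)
The target $O(k\log k)$ bound strongly suggests aiming for a recursion of the shape
\[
f(\Z_2^k) \;\le\; 2\, f\bigl(\Z_2^{\lceil k/2 \rceil}\bigr) + O(k),
\]
which together with the base case $f(\Z_2) \le 3$ solves by the master theorem to $f(\Z_2^k) = O(k\log k)$; the absolute constant $600$ in the statement would then be obtained by optimising the implicit constant in the $O(k)$ term.

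\textbf{Setting up the recursion.} Suppose $\Gamma$ is a complete digraph on $n$ vertices with weighting $w: E(\Gamma) \to \Z_2^k$, where $n$ is at least the right-hand side of the recursion. Split the coordinates as $[k] = I_1 \sqcup I_2$ with $|I_1| = \lceil k/2 \rceil$, and partition $V(\Gamma)$ into three disjoint sets $A$, $B$, $W$ with $|A| = |B| = f(\Z_2^{\lceil k/2 \rceil})$ and $|W|$ of order $k$. Applying the inductive hypothesis to the weighting $\pi_{I_1} \circ w$ on $\Gamma[A]$ yields a directed cycle $C_A \subseteq \Gamma[A]$ whose weight has all $I_1$-coordinates equal to $0$, so $w(C_A) = (0, x)$ with $x \in \Z_2^{I_2}$. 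Analogously I obtain $C_B \subseteq \Gamma[B]$ with $w(C_B) = (y, 0)$.

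\textbf{Splicing.} The cycles $C_A, C_B$ are now vertex-disjoint, and their weights are supported on complementary coordinate halves. I would splice them into one simple cycle by removing an edge $e_A$ from $C_A$ and an edge $e_B$ from $C_B$, obtaining directed paths $P_A$ and $P_B$, and then searching for two internally vertex-disjoint directed paths $Q_1, Q_2$ with internal vertices in $W$, joining the endpoints so that $P_A \cdot Q_1 \cdot P_B \cdot Q_2$ is a simple directed cycle in $\Gamma$. A short computation shows that the total weight of this cycle vanishes in $\Z_2^k$ exactly when
\[
w(Q_1) + w(Q_2) \;=\; (y, x) + w(e_A) + w(e_B),
\]
i.e.\ a prescribed target in $\Z_2^k$ depending on the opened edges.

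\textbf{Main obstacle.} The hard step — and the technical heart of the proof — is to show that this prescribed equation on $w(Q_1) + w(Q_2)$ is solvable with internal vertices only in a set $W$ of size linear in $k$. Since $|W|! \gg 2^k$, a na\"ive counting argument suggests the set of realisable values of $w(Q_1) + w(Q_2)$ already covers $\Z_2^k$; however, turning this heuristic into a rigorous statement requires controlling the distribution of path-weights inside $\Gamma[W]$. I would expect the proof to proceed by a Cayley-graph / character-sum or prefix-sum pigeonhole argument on directed paths in $\Gamma[W]$, quite possibly leveraged recursively with the bound on $f$ itself, and to also exploit the additional freedom of reselecting the opened edges $e_A, e_B$ along $C_A, C_B$. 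Calibrating this argument to give $|W| = O(k)$ — rather than the polynomial-in-$k$ bound that a black-box argument would yield — is the delicate point on which the whole $O(k\log k)$ bound depends.
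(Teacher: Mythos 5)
Your proposal has a genuine gap at exactly the point you flag as the ``main obstacle'', and the gap is not merely technical: the splicing step, as stated, is false without further hypotheses. You need two paths $Q_1, Q_2$ with internal vertices in a set $W$ of size $O(k)$ such that $w(Q_1) + w(Q_2)$ hits an \emph{arbitrary prescribed} target in $\Z_2^k$. But an adversary can weight every edge incident to $W$ (and inside $W$) by $0$, or more generally by elements of a proper subgroup $B < \Z_2^k$; then every achievable value of $w(Q_1)+w(Q_2)$ lies in a coset structure determined by $B$ and the target is simply unreachable, no matter how you count paths. The counting heuristic ($|W|! \gg 2^k$) says nothing about the \emph{distribution} of path-weights, which can be maximally degenerate. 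So the splicing subproblem cannot be solved unconditionally; one must argue that \emph{either} the path-weights through some auxiliary structure are rich enough to hit any target, \emph{or} the weights are confined to a proper subgroup, in which case one recurses on the subgroup after deleting a small vertex set. Handling this dichotomy is the entire content of the paper's argument, and your proposal does not contain a mechanism for it. There is also a secondary issue: the freedom you invoke (reselecting $e_A, e_B$) only varies the target over at most $|C_A|\cdot|C_B|$ values, which is far too few to escape a subgroup obstruction.

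For comparison, the paper does not use a coordinate-splitting recursion at all. It introduces \emph{gadgets} (pairs of $u$--$v$ paths on $3$ vertices, whose ``value'' is the difference of the two path-weights) and shows (\Cref{lem:sigmall}) that a vertex-disjoint family of gadgets whose values have full sumset yields a zero-sum cycle. Choosing families of gadgets extremal in a lexicographic order forces all remaining edge weights into a strictly decreasing chain of subgroups $B_1 \supsetneq B_2 \supsetneq \cdots$ (\Cref{lem:gadseq}); deleting a vertex set $Z$ of size controlled by $h_2(\cdot)$ (the largest reduced multiset, which for $p=2$ is just the largest linearly independent set, of size $k$) confines the remaining graph to a subgroup of smaller dimension, and the recursion is on the \emph{dimension of that subgroup}, not on a coordinate split. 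This is how the paper converts the degenerate case that defeats your splicing step into progress. Your high-level recursion shape $f(k) \le 2f(k/2)+O(k)$ is a reasonable guess for the answer, but the inductive mechanism that makes it work is missing.
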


	A simple construction shows that $f(\Z_p^k) \ge (p-1)k$. Indeed, let $e_1, \ldots, e_k$ be the elementary basis elements of $\Z_p^k$. Consider the complete digraph $\Gamma$ on $(p-1)k$ vertices, let $\{V_1, \ldots, V_k\}$ be an equipartition of $V(\Gamma)$, and label a directed edge $xy$ by $e_i$ whenever $x \in V_i$. It is easy to see that, with this weighting, there are no zero-sum cycles in $\Gamma$, as every cycle contains at most $p-1$ edges labelled $e_i$, for every $i \in [k]$. Thus $f(\Z_p^k) \ge (p-1)k$, as claimed.
	The results in \Cref{thm:main} and \Cref{thm:pis2} are thus tight up to a factor which is polylogarithmic in $k$.

    This problem was independently investigated by Sidorenko and Steiner (unpublished) who showed $f(\Z_p^k) = O(pk^2 \log k)$ with an improved bound of $O(k^2)$ when $p = 2$.

	The proofs of Theorems~\ref{thm:main} and \ref{thm:pis2} follow the same strategy. In \Cref{lem:step} below, we show that $f(\Z_p^k)$ can be bounded by an expression involving the largest size of a `reduced' multisubset of $\Z_p^k$. This parameter is bounded in general for $\Z_p^k$ in  \Cref{thm:reduced-upper}, and a better bound is easily obtained when $p=2$, as we will see in \Cref{obs:reduced}. 

	In \Cref{sec:reduced} we introduce the notion of reduced sets, make some preliminary observations and prove \Cref{thm:reduced-upper}. Then, in \Cref{sec:gad}, we show how we can use particular subgraphs, called `gadgets', collectively to find zero-sum cycles. These allow us to deduce information about the weights on our digraph and, in \Cref{sec:proof}, to prove \Cref{lem:step}. This in turn completes the proofs of \Cref{thm:main} and \Cref{thm:pis2}. We conclude in \Cref{sec:conc} with some discussion about bounding the largest size of a `reduced' multisubset of $\Z_p^k$ and other directions for future research.

	\subsection{Notation and conventions}

		All logarithms will be taken in base $2$.

		For sets $A$ and $B$, we write $A + B := \{a + b: a \in A, b \in B\}$. 

		We will often treat collections $S$ of elements of $\Z_p^k$ as collections of vectors in $\F_p^k$. As such, we will refer to the span or dimension of such a collection $S$. We will use 0 to denote the identity element of any group $A$.

		We will often be working with multisets. In order to minimise confusion for the reader, we now define some notation for multiset operations that will be used throughout.

		Given an abelian group $A$ and a multiset $S$, write $S \msubseteq A$ to denote that $S$ is a multiset with the property that $s \in A$ for every $s \in S$. Write $T \subseteq S$ to denote that $T$ is a (multi)subset of $S$. We will write $S - T$ to denote the multiset obtained from $S$ by removing one copy of each element of the multiset $T$. 

		Define the \emph{multiset union} of sets $A_1,\ldots, A_k$, denoted by $\bigmcup_{i \in [k]} A_i$ to be the multiset whose elements can be partitioned into $A_1,\ldots,A_k$. If $a_1,\ldots, a_k$ are elements of $A$, write $\bigmcup_{i \in [k]}a_i$ to denote the multiset containing $\{a_1,\ldots,a_k\}$.

\section{Reduced sets}\label{sec:reduced}

 Given a multiset $S$ with elements in an abelian group $A$, the \emph{sumset} of $S$, denoted $\Sigma(S)$, is defined to be the set of all subset sums of $S$, namely
\begin{equation*}
    \Sigma(S) := \left\{\sum_{t \in T} t :  T \subseteq S\right\},
\end{equation*}
where the sum of elements in the empty set is defined to be zero, and hence $|\Sigma(\emptyset)| = 1$. In particular, $0 \in \Sigma(S)$ for every multiset $S \msubseteq A$. Note that, when working over $\mathbb{F}_2$, the sumset of $S$ is equal to the span of $S$. 
Say that $S$ is \emph{reduced} if $|\Sigma(S)| > |\Sigma(S \mminus \{s\})|$, for all $s \in S$. Let $h_p(k)$ be the size of a largest reduced multiset in $\Z_p^k$. 

The main result in this section is a general upper bound on $h_p(k)$. 

\begin{restatable}{theorem}{thmReducedUpper} \label{thm:reduced-upper}
	$h_p(k) \le (p-1)(\log k + 1) \cdot k$.
\end{restatable}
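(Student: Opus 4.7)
The target bound $(p-1)(\log k + 1) k$ has the form of the solution to a recursion $h_p(k) \le 2 h_p(\lceil k/2 \rceil) + O((p-1) k)$ (via the master theorem), so my plan is to proceed by strong induction on $k$ using a dimension-halving decomposition of $S$.

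First I would establish two preliminary observations. (i) Every element of a reduced multiset $S \msubseteq \Z_p^k$ has multiplicity at most $p-1$: if some $s \in S$ had multiplicity $m \ge p$, then the contributions $\{j s : 0 \le j \le m\}$ to any subset sum already fill the entire cyclic subgroup $\gen{s}$ once $m \ge p-1$, so removing one copy of $s$ leaves $\Sigma(S)$ unchanged, contradicting reducedness. (ii) We have $0 \notin S$, since a zero summand never contributes to any subset sum. These two facts easily handle the base case $k=1$ via a Cauchy--Davenport argument, yielding $|S| \le p-1 = (p-1)(\log 1 + 1) \cdot 1$.

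For the inductive step the key structural fact I would exploit is that when $S$ decomposes as a multiset union $S = S_1 \mcup S_2$ with $\spa(S_1) \cap \spa(S_2) = \{0\}$, then $\Sigma(S) = \Sigma(S_1) + \Sigma(S_2)$ is an (internal) direct sum, and $S$ is reduced if and only if each $S_i$ is reduced within $\spa(S_i)$. I would attempt to construct such a decomposition as follows: pick a basis $B \subseteq S$ of $\spa(S)$, partition $B$ into halves $B_1 \sqcup B_2$ with $|B_i| \le \lceil k/2 \rceil$, assign each $s \in S \setminus B$ to $S_i$ according to whether its (unique) $B$-support lies entirely in $B_i$, and discard the ``straddlers'' whose supports meet both halves. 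The inductive hypothesis applied inside $\spa(S_i) \cong \F_p^{\lceil k/2 \rceil}$ then yields $|S_i| \le h_p(\lceil k/2 \rceil)$, and it remains only to bound the number of straddlers by $O((p-1) k)$ in order to close the recursion.

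The main obstacle is precisely this control on straddlers: a uniformly random split $B = B_1 \sqcup B_2$ may leave a constant fraction of $S \setminus B$ straddling, which is far too many. To sidestep this I would either (a) choose the split adaptively, e.g.\ via a max-cut-style argument on the ``support hypergraph'' of $S \setminus B$ whose hyperedges record the $B$-supports, aiming to find a split where few elements have supports meeting both sides; or (b) instead aim for the recursion $h_p(k) \le h_p(k-1) + O((p-1) \log k)$, which also telescopes to the target bound and corresponds to peeling off one dimension at a time --- this seems potentially more tractable since the multiplicity bound already severely restricts how many elements of $S$ can live along any single direction outside a given hyperplane, so the per-dimension cost should be quantifiable.
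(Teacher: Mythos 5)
Your proposal is a plan rather than a proof, and the gap you flag yourself --- bounding the straddlers --- is fatal to the decomposition as designed, not merely a technicality. By \Cref{obs:reduced}~\ref{itm:reduced-map}, the image of a reduced multiset under an injective linear map is reduced, so one may apply a generic invertible map to an extremal reduced set $S$; generically, every element of the image has full support with respect to \emph{every} basis $B$ contained in it. For such an $S$, every element of $S \setminus B$ straddles every split $B = B_1 \sqcup B_2$, so the number of straddlers is $|S| - k$ --- exactly the quantity you are trying to bound --- and no max-cut or adaptive choice of split can help. Your fallback (b) has the same problem one dimension at a time: for a generic reduced $S$, any hyperplane misses a constant fraction (roughly $(p-1)/p$) of the elements of $S$, so peeling off one dimension discards $\Omega(|S|)$ elements rather than $O(p\log k)$, and projecting the discarded elements into the hyperplane does not obviously preserve reducedness. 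Your preliminary observations (multiplicity at most $p-1$, $0 \notin S$, the base case, and the direct-sum fact when $\spa(S_1)\cap\spa(S_2)=\{0\}$) are all correct, but they do not get you past this obstacle.

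The paper closes the induction by an entirely different mechanism that never decomposes the ambient space. Set $\ell_k = (p-1)(\log k + 1)$ and suppose $S$ is reduced with $|S| = \ell_k \cdot k$. The induction hypothesis gives that every $S' \subseteq S$ with $\rank(S') = r < k$ satisfies $|S'| \le \ell_r \cdot r \le \ell_k \cdot \rank(S')$, and this is precisely the hypothesis of Edmonds' matroid packing theorem (\Cref{thm:matroid}, via \Cref{cor:matroid}), which extracts $\ell_k$ pairwise disjoint bases of $\F_p^k$ from $S$. A theorem of Alon, Linial and Meshulam (\Cref{thm:additive-base}) then says that the union of $(p-1)\log k + p - 2 \le \ell_k$ bases has sumset all of $\Z_p^k$, so $\Sigma(S) = \Z_p^k$ and $S$ cannot sit inside any strictly larger reduced set. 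If you want to salvage your own approach you would need a substitute for this additive-basis input: the $\log k$ factor in the bound comes from \Cref{thm:additive-base}, not from a halving recursion.
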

This shows that the lower bound $h_p(k) \ge k(p-1)$, from \Cref{obs:reduced}~\ref{itm:reduced-lower-bound} below, is tight up to a $\log k + 1$ factor. 

We begin by gathering together some straightforward observations about reduced multisets. 

\begin{observation} \label{obs:reduced} Let $A$ be a finite abelian group and let $S \msubseteq A$.
	\hfill
	\begin{enumerate}[label = \rm(\roman*)]
	    \item \label{itm:identity} If $S$ is reduced, then the identity element $0 \in A$ is not an element of $S$.
		\item \label{itm:cont-reduced}
	There exists $S' \subseteq S$ such that $S'$ is reduced and $\Sigma(S) = \Sigma(S')$. 
		\item \label{itm:reduced-subset}
			If $S$ is reduced, then every $T \subseteq S$ is reduced.
		\item \label{itm:reduced-map}
			Suppose that $S$ is reduced and $A = \Z_p^k$. Let $f : \Z_p^k \to \Z_p^k$ be an injective linear map. Then $f(S) := \{f(s) : s \in S\}$ is reduced.
		\item \label{itm:reduced-sub-additive}
			$h_p(k) + h_p(\ell) \le h_p(k+\ell)$.
		\item \label{itm:reduced-dim-1}
			$h_p(1) = p-1$.
		\item \label{itm:reduced-p-2}
			A multisubset of $\Z_2^k$ is reduced if and only if it is linearly independent. In particular, $h_2(k) = k$.
		\item \label{itm:reduced-lower-bound}
			$h_p(k) \ge k(p-1)$.
	\end{enumerate}
\end{observation}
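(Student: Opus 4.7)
The plan is to derive all eight parts from the identity $\Sigma(S) = \Sigma(S - \{s\}) \cup (s + \Sigma(S - \{s\}))$, valid for any $s \in S$ in any abelian group $A$. This immediately yields the reformulation that $S$ is reduced if and only if, for every $s \in S$, the translate $s + \Sigma(S - \{s\})$ is not contained in $\Sigma(S - \{s\})$; equivalently, there exists some $b \in \Sigma(S - \{s\})$ with $s + b \notin \Sigma(S - \{s\})$. I would record this reformulation at the outset and use it as the workhorse.

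For \ref{itm:identity}, the equality $0 + \Sigma(S - \{0\}) = \Sigma(S - \{0\})$ shows that any multiset containing $0$ fails the reformulated condition. For \ref{itm:cont-reduced}, I would iteratively discard elements whose removal leaves $|\Sigma|$ unchanged; the process preserves $\Sigma$ at each step, since the containment $\Sigma(S - \{s\}) \subseteq \Sigma(S)$ combined with matching size forces equality, and it halts at a reduced subset. The hard part will be \ref{itm:reduced-subset}, where multiset bookkeeping is required. Given $T \subseteq S$ and $t \in T$, I would take a witness $U \subseteq S - \{t\}$ for reducedness of $S$ at $t$, so that $t + \sum U \notin \Sigma(S - \{t\})$, and decompose $U = U_1 \mcup U_2$ into sub-multisets with $U_1 \subseteq T - \{t\}$ and $U_2 \subseteq S - T$. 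I would then argue that $\sum U_1$ witnesses reducedness of $T$ at $t$: otherwise a representation $t + \sum U_1 = \sum V$ with $V \subseteq T - \{t\}$ would combine with $U_2$ to give $t + \sum U = \sum(V \mcup U_2) \in \Sigma(S - \{t\})$, contradicting the choice of $U$. The multiset union $V \mcup U_2$ is a valid sub-multiset of $S - \{t\}$ because $V$ occupies only copies of elements in $T - \{t\}$ and $U_2$ only copies in $S - T$, and these two parts partition $S - \{t\}$.

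For \ref{itm:reduced-map}, linearity gives $\Sigma(f(S)) = f(\Sigma(S))$, and injectivity gives both $|f(X)| = |X|$ for every set $X$ and $f(S) - \{f(s)\} = f(S - \{s\})$ as multisets, so the reducedness condition transports directly from $S$ to $f(S)$. For \ref{itm:reduced-sub-additive}, I would take reduced multisets $S_1$ and $S_2$ of maximum size in $\Z_p^k$ and $\Z_p^\ell$ respectively, embed them into $\Z_p^{k+\ell}$ on complementary coordinate blocks via $x \mapsto (x,0)$ and $y \mapsto (0,y)$, and note that since the images have disjoint support, $\Sigma(S_1 \mcup S_2)$ factors as the Cartesian product $\Sigma(S_1) \times \Sigma(S_2)$; removing any element from (the image of) $S_i$ strictly shrinks the $i$-th factor. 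For \ref{itm:reduced-dim-1}, iterating $|\Sigma(S)| > |\Sigma(S - \{s\})|$ over successive removals gives $|\Sigma(S)| \geq |S| + 1$, so $|S| \leq p - 1$; conversely, the multiset consisting of $p - 1$ copies of $1$ in $\Z_p$ is reduced. Part \ref{itm:reduced-p-2} is immediate because $\Sigma(S) = \spa(S)$ over $\F_2$, so a multiset is reduced exactly when its elements are linearly independent (in particular, the multiset has no repeats and no zero). Finally, \ref{itm:reduced-lower-bound} follows by induction from \ref{itm:reduced-sub-additive} with base case \ref{itm:reduced-dim-1}.
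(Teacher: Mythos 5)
Your proposal is correct and follows essentially the same route as the paper: in particular, your argument for \ref{itm:reduced-subset} rests on the same decomposition of $S - \{t\}$ into $(T - \{t\}) \mcup (S-T)$, which you phrase element-wise via witnesses rather than through the sumset identity $\Sigma(S - \{t\}) = \Sigma(S - T) + \Sigma(T - \{t\})$ used in the paper. The other parts match the paper's arguments, the only (immaterial) difference being that you obtain \ref{itm:reduced-lower-bound} by iterating \ref{itm:reduced-sub-additive} and \ref{itm:reduced-dim-1} instead of exhibiting the explicit multiset of $p-1$ copies of each elementary basis vector.
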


\begin{proof}
    For \ref{itm:identity}, observe that $\Sigma(S) = \Sigma(S \mminus \{0\})$.  

    For \ref{itm:cont-reduced}, if $S$ is not reduced then sequentially remove elements $t$ such that $\Sigma(S \mminus \{t\}) = \Sigma(S)$ until a reduced multisubset $S'$ remains. 
    
	For \ref{itm:reduced-subset}, if $T \subseteq S$ and $\Sigma(T) = \Sigma(T \mminus \{t\})$ for some $t \in T$, then $\Sigma(S \mminus \{t\}) = \Sigma(S \mminus T) + \Sigma(T \mminus \{t\}) = \Sigma(S \mminus T) + \Sigma(T) = \Sigma(S)$, contradicting the assumption that $S$ is reduced. Thus $|\Sigma(T)| > |\Sigma(T \mminus \{t\})|$ for every $t \in T$, showing that $T$ is reduced.

	Item \ref{itm:reduced-map} follows from the observation that $\Sigma(f(T)) = f(\Sigma(T))$ for every multisubset $T$ of $\Z_p^k$.

	For \ref{itm:reduced-sub-additive}, let $S \msubseteq \Z_p^k$ and $T \msubseteq \Z_p^{\ell}$ be reduced with respective sizes of $h_p(k)$ and $h_p(\ell)$.
    Define $R := \{(s, 0) : s \in S\} \cup \{(0, t) : t \in T\} \subseteq \Z_k^p \times \Z_p^{\ell}$. It is easy to see that $R$ is a reduced multiset in $\Z_p^{k} \times \Z_p^{\ell} \cong \Z_p^{k+\ell}$ of size $h_p(k) + h_p(\ell)$, showing $h_p(k+\ell) \ge h_p(k) + h_p(\ell)$.

	For \ref{itm:reduced-dim-1}, suppose that $S \msubseteq \Z_p$ is reduced. Enumerate the elements of $S$ as $\{s_1, \ldots, s_k\}$. By \ref{itm:reduced-subset}, $\{s_1, \ldots, s_i\}$ is reduced for all $i \in [k]$, and so
	\begin{equation*}
		|\Sigma(S)| = \big|\Sigma(\{s_1, \ldots, s_k\})\big| 
		\ge \big|\Sigma(\{s_1, \ldots, s_{k-1}\})\big| + 1 
		\ge \ldots \ge |\Sigma(\emptyset)| + k = k + 1.
	\end{equation*}
	Since, trivially, $|\Sigma(S)| \le p$, we have $|S| = k \le p-1$.

	Thinking of $\Z_2^k$ as a vector space over $\F_2$, the sumset of a set $S \subseteq \Z_2^k$ is the same as the span of $S$.  Item \ref{itm:reduced-p-2} follows.

	Finally, for \ref{itm:reduced-lower-bound}, notice that the multiset consisting of $p-1$ copies of each elementary basis element of $\Z_p^k$ is a reduced multiset of size $k(p-1)$.
\end{proof}

\subsection{Proof of \Cref{thm:reduced-upper}}

	Before giving the proof, we will state two preliminary results. The first is the Matroid Packing theorem, due to Edmonds \cite{edmonds1965lehman}. Before stating it, we will define matroids and some relevant notions. 

	A \emph{matroid} is a pair $(V, \cI)$, where $V$ is a set and $\cI$ is a non-empty family of subsets of $V$, referred to as \emph{independent sets}, which is closed under taking subsets, and satisfies the following \emph{augmentation property}: if $I, I' \in \cI$ satisfy $|I| > |I'|$, then there exists $x \in I \setminus I'$ such that $I' \cup \{x\} \in \cI$.
	Define the \emph{rank} of a subset $S \subseteq V$, denoted $\rank(S)$, as the size of a largest independent set contained in $S$. A \emph{base} is an independent set of size $\rank(V)$.

	\begin{theorem}[Matroid Packing theorem, Edmonds \cite{edmonds1965lehman}] \label{thm:matroid}
		A matroid $M = (V, \cI)$ contains $t$ pairwise disjoint bases if and only if every $T \subseteq V$ satisfies $|V| - |T| \ge t \cdot (\rank(V) - \rank(T))$.
	\end{theorem}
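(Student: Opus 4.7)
The \emph{necessity} direction is a short counting argument. Suppose $B_1, \ldots, B_t$ are pairwise disjoint bases and fix $T \subseteq V$. Since $B_i \cap T$ is an independent subset of $T$, we have $|B_i \cap T| \le \rank(T)$, hence $|B_i \setminus T| \ge \rank(V) - \rank(T)$. The sets $B_1 \setminus T, \ldots, B_t \setminus T$ are pairwise disjoint subsets of $V \setminus T$, so summing gives $|V| - |T| \ge t \cdot (\rank(V) - \rank(T))$, as required.

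For the \emph{sufficiency} direction, the plan is to reduce to the matroid union theorem. Define $M^{\vee t}$ to be the matroid on ground set $V$ whose independent sets are those $I \subseteq V$ admitting a partition $I = I_1 \sqcup \cdots \sqcup I_t$ with each $I_j$ independent in $M$. A routine verification of the augmentation property shows that $M^{\vee t}$ is indeed a matroid. The key observation is that $M$ contains $t$ pairwise disjoint bases if and only if $\rank_{M^{\vee t}}(V) = t \cdot \rank(V)$: any $M^{\vee t}$-independent set partitions into $t$ pieces of size at most $\rank(V)$, and equality in the total forces each piece to be a base of $M$.

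Granting the matroid union rank formula
$$
\rank_{M^{\vee t}}(V) \;=\; \min_{T \subseteq V}\bigl(\,|V \setminus T| \,+\, t\cdot\rank(T)\,\bigr),
$$
the proof concludes: this minimum equals $t\cdot\rank(V)$ exactly when every $T \subseteq V$ satisfies $|V \setminus T| + t\cdot\rank(T) \ge t\cdot\rank(V)$, which rearranges to the stated hypothesis.

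The main obstacle is proving the matroid union rank formula itself, which is the real substance of the theorem. I would prove it via an augmenting-path argument generalising bipartite matching: starting from any $M^{\vee t}$-independent set $I = I_1 \sqcup \cdots \sqcup I_t$, build an exchange digraph on $V$ whose arcs record, for each $e \in V$ and each colour class $j$, the unique circuit $C_j(e) \subseteq I_j \cup \{e\}$ and the elements $f \in C_j(e) \setminus \{e\}$ whose removal from $I_j$ restores independence. A shortest directed path from $V \setminus I$ to an element absorbable into some $I_j$ produces an augmentation; when no such path exists, the set $T$ of vertices not reachable in the reverse exchange graph from $V \setminus I$ can be shown (by submodularity of $\rank$) to attain equality in the min-formula. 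The delicate point is verifying that shortest-path augmentation preserves simultaneous independence across all $t$ colour classes, which is where the careful choice of \emph{shortest} augmenting path is essential.
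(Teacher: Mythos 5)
The paper offers no proof of this statement at all: it is imported as a black box from Edmonds \cite{edmonds1965lehman}, so there is nothing in the paper to compare your argument with line by line. Judged on its own terms, your necessity direction is complete and correct, and your reduction of sufficiency to the matroid union (Edmonds--Nash-Williams) rank formula is also sound: the equivalence ``$M$ has $t$ pairwise disjoint bases $\iff \rank_{M^{\vee t}}(V) = t\cdot\rank(V)$'' is argued correctly, and since taking $T = V$ shows the minimum in the union formula never exceeds $t\cdot\rank(V)$, equality is indeed exactly the condition that every $T$ satisfies $|V|-|T| \ge t\,(\rank(V)-\rank(T))$.

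The gap is where you yourself locate it: the union rank formula carries the entire substance, and your treatment of it is a sketch, not a proof. Two essential claims are asserted rather than established. First, that augmenting along a \emph{shortest} path in the exchange digraph preserves independence of all $t$ colour classes simultaneously; this is the genuinely delicate lemma (it fails for arbitrary directed paths), and it needs the standard induction using the uniqueness of the circuits $C_j(e)$ together with the minimality of the path, none of which is carried out. Second, that when no augmenting path exists, the unreachable set $T$ attains equality in the min-formula; this requires verifying both that every element of $V\setminus T$ already lies in $I$ and that $I_j \cap T$ spans $T$ in $M$ for every $j$ (so $|I_j\cap T| = \rank(T)$), which is an argument about closures and reachability, not a one-line appeal to submodularity. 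A smaller point: $C_j(e)$ is only defined when $I_j\cup\{e\}$ is dependent, so the construction of the exchange digraph must separate that case from the case where $e$ is directly absorbable into $I_j$. In short, your architecture is the standard and correct route to base packing, but as written the core of the proof is missing; either carry out the augmenting-path lemma and the termination certificate in full, or do as the paper does and cite matroid union / base packing as a known theorem.
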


	We draw the following almost immediate conclusion regarding disjoint bases in multisubsets of vector spaces. Given a (multi)subset $S$ of a vector space, define $\rank(S) := \dim(\spa(S))$.
	\begin{corollary} \label{cor:matroid}
		Let $U$ be a finite dimensional vector space and let $S \msubseteq U$ have full rank.
		Then $S$ contains $t$ pairwise disjoint bases of $U$ if and only if every multisubset $T \subseteq S$ satisfies $|S| - |T| \ge t \cdot (\rank(S) - \rank(T))$.
	\end{corollary}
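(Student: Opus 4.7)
The plan is to deduce the corollary directly from \Cref{thm:matroid} by realising the multiset $S$ as the ground set of a linear matroid with multiplicities.

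First, I would index the multiset $S$: let $V$ be a set of size $|S|$ equipped with a map $\phi : V \to U$ under which each indexed copy of an element $s \in S$ maps to $s$. Declare $I \subseteq V$ to be independent if and only if $\phi|_I$ is injective and $\phi(I)$ is linearly independent in $U$. Verifying the matroid axioms is routine: the augmentation property reduces to the standard exchange property for linearly independent sets in $U$, applied to the images under $\phi$ (given independent $I, I' \subseteq V$ with $|I| > |I'|$, any $v \in \phi(I) \setminus \phi(I')$ extending $\phi(I')$ to a larger linearly independent set has a preimage $x \in I \setminus I'$ for which $I' \cup \{x\}$ is independent).

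Next, I would verify the key dictionary. For any $T' \subseteq V$, the matroid rank $\rank_M(T')$ equals $\rank(\phi(T'))$: a basis of $\spa(\phi(T'))$ chosen from $\phi(T')$ consists of $\dim(\spa(\phi(T')))$ distinct linearly independent vectors, any set of preimages of which is independent in $M$, and no larger independent subset of $T'$ can exist. Since $S$ has full rank, $\rank_M(V) = \dim(U)$, so a base of $M$ is exactly a subset of $V$ mapping bijectively onto a basis of $U$ contained (as a multisubset) in $S$; consequently, $t$ pairwise disjoint bases of $M$ correspond to $t$ pairwise disjoint (as multisets) bases of $U$ contained in $S$.

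Finally, I would apply \Cref{thm:matroid}. Every subset $T' \subseteq V$ projects to a multisubset $T = \phi(T') \msubseteq S$ with $|T'| = |T|$ (as a multiset) and $\rank_M(T') = \rank(T)$, and every multisubset $T \subseteq S$ arises in this way by choosing preimages. Hence the condition ``$|V| - |T'| \ge t(\rank_M(V) - \rank_M(T'))$ for every $T' \subseteq V$'' translates verbatim to the condition in the corollary. The only real obstacle is the bookkeeping around multiplicities when passing between $V$ and $S$, and making sure the matroid notion of ``disjoint bases'' matches the multiset notion of ``disjoint bases of $U$ in $S$''; both match up cleanly under the indexing.
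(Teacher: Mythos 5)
Your proposal is correct and follows essentially the same route as the paper: build a set-indexed (``multiplicity-expanded'') linear matroid on a ground set emulating $S$, check that ranks and bases correspond, and invoke \Cref{thm:matroid}. You simply spell out the verification of the matroid axioms and the rank dictionary in more detail than the paper does.
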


	\begin{proof}
		We define a matroid $M = (V, \cI)$, as follows. For each element $s \in S$ of multiplicity $m_s$, add the elements $(s,1),\ldots, (s,m_s)$ to $V$ and say that each of these elements \emph{corresponds} to $s$. So $V$ is a set (rather than a multiset) which emulates $S$. Define $\cI$  to be the subsets of $V$ with the property that the collection of vectors they correspond to in $S$ is linearly independent. 
		It is easy to check that $M$ is indeed a matroid and that a base in $M$ corresponds to a basis of $U$.
		It follows from \Cref{thm:matroid} that $M$ contains $t$ pairwise disjoint bases of $U$ if and only if $|V| - |T| \ge t \cdot (\rank(V) - \rank(T))$ for every $T \subseteq V$. This proves the statement, as $|V| = |S|$ and $\rank(V) = \rank(S)$.
	\end{proof}	

We will also need the following result of Alon, Linial, and Meshulam~\cite{alon1991additive}.

	\begin{theorem}[Proposition 3.1 in \cite{alon1991additive}] \label{thm:additive-base}
		Let $S_1, \ldots, S_{\ell}$ be $\ell$ bases of the vector space $\Z_p^k$, where $\ell \ge (p-1)\log k + p-2$, and let $S = \bigmcup_{i \in \ell}S_i$. Then $\Sigma(S) = \Z_p^k$.
	\end{theorem}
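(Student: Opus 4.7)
The plan is to prove the result by induction on $k$, using roughly $p-1$ of the bases per step to halve the ambient dimension, so that the hypothesis $\ell \ge (p-1)\log k + p - 2$ affords $\log k$ halving steps plus a base-case reserve of $p - 2$ bases in $\Z_p$.

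For the base case $k = 1$, we have $\ell \ge p - 2$ nonzero elements of $\Z_p$, and a direct Cauchy--Davenport argument (with small-case verification) shows their subset sums exhaust $\Z_p$. For the inductive step, I would pick $p - 1$ of the bases, say $S_1, \ldots, S_{p-1}$, and a subspace $W \le \Z_p^k$ of dimension $\lfloor k/2 \rfloor$---naturally chosen, for example, as the span of $\lfloor k/2 \rfloor$ elements drawn from $S_1$---such that every coset of $W$ is realised by a subset sum of $S_1 \mcup \cdots \mcup S_{p-1}$. Given any target $v \in \Z_p^k$, one first uses these $p - 1$ bases to reach the coset $v + W$, then applies the inductive hypothesis to the remaining $\ell - (p-1) \ge (p-1)\log(k/2) + p - 2$ bases---from whose projections onto $W$ we extract bases of $W$ via the Matroid Packing theorem (\Cref{thm:matroid})---to realise the residue $v' \in W$.

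The main obstacle is the coset-covering step: showing that $p - 1$ bases of $\Z_p^k$ can cover $\Z_p^k/W$ via their subset sums. This is strictly stronger than what the inductive hypothesis directly yields (which would demand $(p-1)\log \lceil k/2 \rceil + p - 2$ bases in the quotient), and must be powered by an additional input. The natural candidate is the polynomial method: encoding the condition ``the subset sum lies in the coset $v + W$'' as a system of $\lceil k/2 \rceil$ polynomial equations of degree $p - 1$ in the $(p-1)k$ indicator variables (using the identity that $x^{p-1}$ is the indicator of $x \ne 0$ in $\F_p$), an application of the Chevalley--Warning theorem or its refinements should force the existence of $\F_p$-solutions, which translate into valid $\{0,1\}$-subset-sum witnesses. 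Verifying that the projected structure of the bases---a basis of $\Z_p^k/W$ together with zero elements---suffices for this argument to yield a solution for every target coset is the crux of the proof, and the point where the specific value $p-1$ (rather than something smaller) enters decisively.
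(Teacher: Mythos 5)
Note first that the paper does not prove this statement at all---it is quoted verbatim from Proposition 3.1 of \cite{alon1991additive}---so your sketch has to stand on its own; its high-level shape (dimension halving at a cost of $p-1$ bases per step, powered by a polynomial-method lemma) is indeed the shape of the Alon--Linial--Meshulam argument, but as written it has genuine gaps. First, the base case is false as claimed: Cauchy--Davenport applied to $p-2$ nonzero elements of $\Z_p$ only gives $|\Sigma(S)|\ge p-1$, and this is sharp ($p-2$ copies of $1$ have sumset $\{0,1,\dots,p-2\}$); one needs $p-1$ nonzero elements, so the induction must be set up with an extra basis in reserve (as in the paper's application, which invokes the theorem with $(p-1)(\log k+1)$ bases). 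Second, and more seriously, your two stages are composed in an order that does not produce an exact representation. After you use $S_1,\dots,S_{p-1}$ to land in the coset $v+W$, the residue $v'$ lies in $W$, but the remaining bases consist of vectors of $\Z_p^k$, not of $W$: applying the inductive hypothesis to their projections onto $W$ only realises $v'$ modulo a complement of $W$ (a subset sum of projections is the projection of a subset sum), and nothing is left to absorb that error. The composition that closes is the reverse one: the many remaining bases act in the quotient $\Z_p^k/W$, where their images genuinely contain bases of the quotient and the inductive hypothesis applies, hitting the coset of the target; the designated $p-1$ bases must then supply every element of $W$ \emph{exactly}, i.e.\ the halving lemma you need is that $\Sigma(S_1\mcup\cdots\mcup S_{p-1})$ \emph{contains} a subspace $W$ of dimension at least $\lfloor k/2\rfloor$, not merely that it covers the cosets of some such $W$.

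Third, that halving lemma is precisely the hard content of Proposition 3.1 in \cite{alon1991additive}, and your sketch does not prove it---you correctly flag it as the crux, but the tool you reach for does not do the job as stated: Chevalley--Warning produces solutions in $\F_p^N$, whereas you need $\{0,1\}$-valued indicator solutions. The right tool is the coefficient criterion of the type in \Cref{lem:polynomial} (Corollary 3.4 of \cite{alon1991additive}), whose hypothesis is the non-vanishing of a specific permanent-like sum over equipartitions, and certifying that non-vanishing for a suitably chosen $W$ is where all the work lies. In particular, your parenthetical hope that ``a basis of $\Z_p^k/W$ together with zero elements'' (the projection of $S_1$ alone) suffices cannot be right for $p\ge 3$: the $\{0,1\}$-combinations of a single basis of the quotient do not cover the quotient, so the projections of all $p-1$ bases must enter the computation, and the choice of $W$ has to be engineered so that the relevant coefficient is provably nonzero. (Smaller points: extracting bases of $W$ via \Cref{thm:matroid} is unnecessary---each projected basis already spans $W$---and the dimension bookkeeping needs ceilings.) So the proposal identifies the correct architecture but mis-assembles the reduction and leaves the decisive step unproven; as it stands it is not a proof.
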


	We now have all the pieces we need to prove \Cref{thm:reduced-upper}, restated here.
	\thmReducedUpper*

	\begin{proof}[Proof of \Cref{thm:reduced-upper}]
		We will prove the statement by induction on $k$. Notice that it follows directly from \Cref{obs:reduced}~\ref{itm:reduced-dim-1} if $k = 1$, so take $k \ge 2$ and assume that the statement holds for all $k' \in [k-1]$. 
		Write $\ell_k = (p-1)(\log k + 1)$. Suppose that $T \msubseteq \mathbb{Z}_p^k$ is reduced and $|T| > \ell_k \cdot k$. Let $S \subseteq T$ such that $|S| = \ell_k \cdot k$. By \Cref{obs:reduced}~\ref{itm:reduced-subset}, $S$ is reduced. We will show that $\Sigma(S) = \Z_p^k$, contradicting the assumption that $T$ is reduced.  
		
		By induction, every $S' \msubseteq S$ with $r:= \rank(S') < k$ satisfies $|S'| \le \ell_{r} \cdot r \le \ell_k \cdot \rank(S')$, because $S'$ is a reduced multisubset of an $r$-dimensional subspace of $\Z_p^k$ (using \Cref{obs:reduced}~\ref{itm:reduced-subset} and~\ref{itm:reduced-map}). In particular, we have $\rank(S) = k$, and if $S'\msubseteq S$ satisfies $\rank(S') < k$, then
		\begin{equation*}
			\frac{|S| - |S'|}{\rank(S) - \rank(S')}
			\ge \frac{\ell_k \cdot k - \ell_k \cdot \rank(S')}{k - \rank(S')} \ge \ell_k.
		\end{equation*}
		It thus follows from \Cref{cor:matroid} that $S$ is the multiset union of $\ell_k$ pairwise disjoint bases. By \Cref{thm:additive-base}, this implies that $\Sigma(S) = \Z_p^k$, as suffices to complete the result. \end{proof}

\section{Gadgets}\label{sec:gad}
	Let $\Gamma$ be a complete digraph, $A$ be an abelian group, and let $w: E(\Gamma) \rightarrow A$. We will write $uv$ to denote the directed edge from $u$ to $v$.

	\begin{definition}[Gadgets]
		Let $u$ and $v$ be distinct vertices in $\Gamma$. A \emph{gadget} rooted at $(u,v)$ is defined to be a pair $g$ of paths $P$ and $Q$ directed from $u$ to $v$. Let $u(g), v(g), P(g)$ and $Q(g)$ refer to $u, v, P$ and $Q$, respectively.

		Define the \emph{vertex set} of $g$ by $V(g):= V(P) \cup V(Q)$, and its \emph{value} by $g^* := w(Q) - w(P)$, where the \emph{weight} of a subdigraph $H \subseteq \Gamma$ is $w(H) := \sum_{e \in E(H)} w(H)$.

	\end{definition}

	The next lemma provides a simple condition for a collection of gadgets to yield a zero-sum cycle in $\Gamma$. The point is that if the collection of values is `rich' enough, then by passing through the gadgets, choosing either $P$ or $Q$ at each one, we are able to use the collection to generate a zero-sum cycle. 

	\begin{lemma}\label{lem:sigmall}
		Let $\Gamma$ be a complete digraph whose edges have weights in the abelian group $A$.
		Let $\cG$ be a family of pairwise vertex-disjoint gadgets in $\G$. Let $S := \bigmcup_{ g \in \cG} g^*$. If $\Sigma(S) = A$, then $\G$ contains a zero-sum cycle. 
	\end{lemma}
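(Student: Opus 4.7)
The plan is to enumerate the gadgets cyclically, splice them together with direct edges between consecutive endpoints, and show that by toggling between $P(g)$ and $Q(g)$ in each gadget we can realise every value in $A$ as the weight of the resulting cycle. Concretely, list $\cG = \{g_1, \ldots, g_m\}$ in some fixed order, set $u_i := u(g_i)$, $v_i := v(g_i)$, and let $e_i$ be the direct edge $v_i u_{i+1}$ in $\G$ (indices taken modulo $m$), which exists because $\G$ is a complete digraph. For each subset $T \subseteq \cG$, define $C_T$ to be the closed walk obtained by traversing $Q(g)$ from $u(g)$ to $v(g)$ for every $g \in T$, traversing $P(g)$ from $u(g)$ to $v(g)$ for every $g \in \cG \setminus T$, and following each connecting edge $e_i$ in turn.

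The first step is to verify that $C_T$ is a genuine directed cycle. Within each gadget, the chosen path is simple and uses only vertices of $V(g)$. Because the gadgets in $\cG$ are pairwise vertex-disjoint, paths from different gadgets meet no common vertex, and the connecting edges $e_i$ only touch the roots $v_i, u_{i+1}$ that lie in $V(g_i)$ and $V(g_{i+1})$ respectively. Hence each vertex of $C_T$ is visited exactly once, so $C_T$ is a directed cycle in $\G$.

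Next I compute the weight of $C_T$. Writing $\alpha := \sum_{i=1}^{m} w(e_i) + \sum_{i=1}^{m} w(P(g_i))$, a straightforward rearrangement gives
\begin{equation*}
    w(C_T) \;=\; \sum_{i=1}^{m} w(e_i) + \sum_{g \notin T} w(P(g)) + \sum_{g \in T} w(Q(g)) \;=\; \alpha + \sum_{g \in T} g^{*}.
\end{equation*}
In particular, the set of achievable values of $w(C_T)$, as $T$ ranges over subsets of $\cG$, is exactly $\alpha + \Sigma(S)$.

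Finally, I invoke the hypothesis $\Sigma(S) = A$. Since $-\alpha \in A = \Sigma(S)$, there exists a subset $T \subseteq \cG$ with $\sum_{g \in T} g^{*} = -\alpha$, and for this choice $C_T$ is a zero-sum directed cycle in $\G$. There is no real obstacle here beyond bookkeeping; the only point that deserves explicit attention is the verification that $C_T$ is simple, which is where the vertex-disjointness of the gadgets is essential.
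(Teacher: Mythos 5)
Your proof is correct and follows essentially the same route as the paper: choose a subset of gadgets whose values sum to the negative of the fixed part of the cycle weight (the connecting edges plus all the $P$-paths), which exists since $\Sigma(S)=A$, and splice the corresponding paths into a zero-sum cycle. The explicit check that $C_T$ is a simple directed cycle is a nice touch that the paper leaves implicit.
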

	\begin{proof}
		Let $\cG = \{g_1,\ldots, g_t\}$, where $g_i = g_i(u_i,v_i)$. For ease of notation, define $u_{t+1}:= u_1$. Let $I \subseteq [t]$ be a set of indices such that the following holds, where $(u_i, v_i, P_i, Q_i) := (u(g_i), v(g_i), P(g_i), Q(g_i))$.
		\begin{equation*}
			\sum_{i \in I}g^*_i 
			= - \sum_{i = 1}^t w(P_i) - \sum_{i=1}^{t} w(v_i u_{i+1}),
		\end{equation*}
		Such a set exists as $\Sigma(S) = A$.
		Using $\sum_{i \in I} g_i^* = \sum_{i \in I} \big(w(Q_i) - w(P_i)\big)$ and rearranging yields
		\begin{equation}\label{eq:Cwt}
			0 = \sum_{i \in I}w(Q_i) + \sum_{i \notin I}w(P_i) + \sum_{i=1}^{t} w(v_iu_{i+1}).
		\end{equation}

		Now let $C$ be the directed cycle $R_1v_1u_2R_2v_2u_2 \ldots R_t v_t u_1$, where 
		$$R_i:= \begin{cases}
			Q_i & i \in I\\
			P_i & i \notin I.
		\end{cases}$$
		Note that $\sum_{e \in C}w(e)$ is precisely the right hand side of the expression \eqref{eq:Cwt}, and so $C$ is a zero-sum cycle. 
	\end{proof}

	The following lemma allows us to modify a weighting to ensure all out-edges from a particular vertex receive weight 0, whilst preserving properties of the weighting. The idea of locally modifying weights of edges adjacent to a fixed vertex without creating or destroying zero-sum cycles was also used by M\'esz\'aros and Steiner~\cite{MS21}. 

	\begin{lemma} \label{lem:zero-vertex}
		Let $\Gamma$ be a complete digraph, let $A$ be an abelian group, and let $w : E(\G) \to A$. Suppose that there are no zero-sum cycles with respect to $w$, and let $v_0 \in V(\G)$. Then there exists $w' : E(\G) \to A$ such that: $w'(v_0 u) = 0$ for every $u \in V(\G)$; there are no zero-sum cycles with respect to $w'$; and every gadget $g$ in $\G$ has the same value with respect to both $w$ and $w'$.
	\end{lemma}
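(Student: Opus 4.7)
The plan is to use a potential function / gauge transformation on vertices to shift weights. Concretely, I would define $\phi : V(\Gamma) \to A$ by setting $\phi(v_0) := 0$ and $\phi(u) := w(v_0 u)$ for each $u \in V(\Gamma) \setminus \{v_0\}$, and then define the new weighting
\begin{equation*}
    w'(xy) := w(xy) + \phi(x) - \phi(y) \quad \text{for every directed edge } xy \in E(\Gamma).
\end{equation*}

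The key observation is that for any directed walk $W = x_0 x_1 \ldots x_r$ in $\Gamma$, the $\phi$-contributions telescope, giving $w'(W) = w(W) + \phi(x_0) - \phi(x_r)$. From this, all three conclusions follow essentially by inspection. First, $w'(v_0 u) = w(v_0 u) + \phi(v_0) - \phi(u) = w(v_0 u) + 0 - w(v_0 u) = 0$, giving condition (1). Second, for any directed cycle $C$ the telescoping sum vanishes, so $w'(C) = w(C)$; since $w$ admits no zero-sum cycle, neither does $w'$, giving condition (2). Third, for a gadget $g$ rooted at $(u,v)$ with paths $P$ and $Q$, both paths go from $u$ to $v$, so
\begin{equation*}
    w'(Q) - w'(P) = \bigl(w(Q) + \phi(u) - \phi(v)\bigr) - \bigl(w(P) + \phi(u) - \phi(v)\bigr) = w(Q) - w(P),
\end{equation*}
so $g^*$ is the same under $w$ and $w'$, giving condition (3).

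There is no real obstacle here: the whole argument is a one-line gauge transformation, and the only thing to be careful about is orienting the potential correctly so that the out-edges from $v_0$ (rather than in-edges) are killed. The choice $\phi(v_0)=0$, $\phi(u)=w(v_0 u)$ is tailored exactly to the requirement $w'(v_0 u)=0$. The preservation of zero-sum cycles and of gadget values are both instances of the same telescoping principle, the former because cycles are closed and the latter because the two paths in a gadget share endpoints.
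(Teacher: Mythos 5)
Your proof is correct and is essentially the same as the paper's: the paper realises the identical weighting as a composition of single-vertex modifications $f_{u,\alpha}$ (adding $\alpha$ to out-edges and subtracting it from in-edges of $u$) with $\alpha = w(v_0u)$, which composes to exactly your global potential shift $w'(xy) = w(xy) + \phi(x) - \phi(y)$. The verification via telescoping along cycles and along the two paths of a gadget matches the paper's argument.
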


	\begin{proof}
		Let $u \in V(\G)$ and let $\alpha \in A$. Define $f_{u, \alpha}(w) : E(\Gamma) \to A$ as follows
		\begin{equation*}
			f_{u, \alpha}(w)(e) := \left\{
				\begin{array}{ll}
					w(e) & \text{ if $e \in \Gamma \setminus \{u\}$} \\
					w(e) + \alpha & \text{ if $e = uv$ for some $v \in V(\G)$} \\
					w(e) - \alpha & \text{ if $e = vu$ for some $v \in V(\G)$.}
				\end{array}
				\right.
			\end{equation*}
			It is easy to check that $w(C) = f_{u, \alpha}(w)(C)$ for every directed cycle $C$. Similarly, if $P$ and $Q$ are two directed paths with the same start and end points, then $w(P) - w(Q) = f_{u, \alpha}(w)(P) - f_{u, \alpha}(w)(Q)$. 
			It follows that there are no $w$-zero-sum cycles if and only if there are no $f_{u, \alpha}(w)$-zero-sum cycles, and that the value of any gadget $g$ is the same with respect to $w$ and $f_{u, \alpha}(w)$.

			Let $u_1, \ldots, u_{n-1}$ be an arbitrary ordering of $V(\G) \setminus \{v_0\}$, and let $\alpha_i := w(v_0 u_i)$. Let 
			\begin{equation*}
				w' := f_{u_{n-1}, \alpha_{n-1}} \circ \ldots \circ f_{u_1, \alpha_1} (w).
			\end{equation*}
			Notice that $w'(v_0 u_i) = w(v_0 u_i) - \alpha_i = 0$, for every $i \in [n-1]$; equivalently, $w'(v_0 u) = 0$ for every $u \in V(\G) \setminus \{v_0\}$. By the above discussion, there are no $w'$-zero-sum cycles, and every gadget in $\G$ has the same value with respect to $w'$ and $w$.
		\end{proof}

		We now introduce some notation and terminology that we will use to refer to particular families of gadgets. 

		\begin{definition}
			Let $\Gamma$ be a complete digraph, let $p$ be prime, let $k$ be a positive integer and let $w: E(\G) \rightarrow \Z_p^k$. A collection of families of gadgets $\cG_1, \ldots, \cG_t$  in $\G$ is said to be \emph{useful} if it satisfies the following properties:
			\begin{enumerate}[label = \rm(P\arabic*)]
				\item  \label{itm:gadget-disjoint}
					The collections $\cG_1, \ldots, \cG_t$ are pairwise disjoint, and
					$V(g_1) \cap V(g_2) = \emptyset$ for all distinct $g_1, g_2 \in \bigcup_{i=1}^t\cG_i$;
				\item \label{itm:gadget-size}
					$|V(g)| = 3$ for all $g \in \bigcup_{i = 1}^t\cG_i$;
				\item  \label{itm:gadget-lex}
					defining the multiset $U_i := \bigmcup_{g \in \mathcal{G}_i}g^*$, the sequence $|\Sigma(U_1)|, \ldots, |\Sigma(U_t)|$ is as late as possible in the lexicographic ordering\footnote{Recall that $a_1,a_2,\ldots, a_t < b_1,b_2,\ldots, b_t$ in the \emph{lexicographic ordering} on $\mathbb{R}^t$ if there is some $i \in [t]$ such that $a_i < b_i$ and $a_j = b_j$ for all $j < i$.}  on $\mathbb{R}^t$ among sequences satisfying \ref{itm:gadget-disjoint} and \ref{itm:gadget-size}.
			\end{enumerate}
			We say that a useful family is also \emph{reduced} if the following holds.
			\begin{enumerate}[label = \rm(P\arabic*), resume]
				\item \label{itm:gadget-reduced}
					$U_i$ is a reduced multiset, for every $i \in [t]$.
			\end{enumerate}
			By \Cref{obs:reduced}~\ref{itm:cont-reduced}, for every useful family of gadgets $\cG_1, \ldots, \cG_t$ in $\G$ there is a useful and reduced family $\cG_1', \ldots, \cG_t'$ with $\Sigma(U_i') = \Sigma(U_i)$ (where $U_i' := \bigmcup_{g \in \cG_i'}g^*$). 

			For each $i \in [t]$ define
			\begin{equation}\label{eq:VBd}
				V_i := \bigcup_{g \in \cG_i}V(g), \qquad B_i:= \{x \in \mathbb{Z}_p^k: x + \Sigma(U_i) = \Sigma(U_i)\}.
			\end{equation}
		\end{definition}

		Observe that the sets $V_i$ are pairwise disjoint, that $B_i$ is a subgroup of $\Z_p^k$, and $B_i \subseteq \Sigma(U_i)$ (using $0 \in \Sigma(U_i)$). Let $d_i := \dim(B_i)$, where $B_i$ is treated as a vector subspace of the vector space $\Z_p^k$.   

		The next lemma shows that once we have `pulled out' a useful collection of gadgets, we have control over the weights of many of the edges in the digraph.

		\begin{lemma}\label{lem:gadseq}
			Let $\Gamma$ be a complete digraph, let $p$ be prime and let $w: E(\G) \rightarrow \Z_p^k$ be a labelling with no zero-sum cycles. Let $\cG_1, \ldots, \cG_t$ be a useful collection of families of gadgets in $\G$, and define $V_i, B_i, d_i$ as above. Suppose that $v_0 \in V(\G) \setminus \bigcup_{j = 1}^t V_j$ satisfies $w(v_0 u) = 0$ for every vertex $u \neq v_0$. Then:
			\begin{enumerate}[label = \rm(\roman*)]
				\item\label{it:wtsinB}
					For each $i \in [t]$, every edge $e \in \G \setminus (\{v_0\} \cup \bigcup_{j=1}^i V_j)$ satisfies $w(e) \in B_i$.
				\item\label{it:Bdec}
					We have $k > d_1 > \ldots > d_t$.
			\end{enumerate}    
		\end{lemma}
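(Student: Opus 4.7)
The plan is to prove (i) by contradiction using the lex-maximality property P3, and then derive (ii) by induction, combining (i) with \Cref{lem:sigmall} applied inside appropriate subgroups.

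For (i), suppose some $i \in [t]$ and some edge $e = xy \in \G \setminus (\{v_0\} \cup \bigcup_{j=1}^{i} V_j)$ have $w(e) \notin B_i$. The key move is to exploit the hypothesis $w(v_0 u) = 0$ to build a gadget on three vertices: let $g$ be rooted at $(v_0, y)$ with $P(g) = v_0 y$ and $Q(g) = v_0 x y$, so that $V(g) = \{v_0, x, y\}$ and $g^* = w(v_0x) + w(xy) - w(v_0 y) = w(e)$. Insert $g$ into $\cG_i$ and, to preserve P1, remove from each $\cG_j$ with $j > i$ any gadgets whose vertex set meets $\{x, y\}$; no change is needed in $\cG_1, \ldots, \cG_i$, since $v_0 \notin \bigcup_{j=1}^t V_j$ and $x, y \notin V_1 \cup \ldots \cup V_i$. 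The new collection still satisfies P1 and P2, and positions $1, \ldots, i-1$ of the lex sequence are untouched. At position $i$, I have $\Sigma(U_i \mcup \{g^*\}) = \Sigma(U_i) \cup (g^* + \Sigma(U_i))$; since $g^* \notin B_i$ and $B_i$ is the stabilizer of $\Sigma(U_i)$, the translate $g^* + \Sigma(U_i)$ is a distinct set of equal cardinality, so the union strictly exceeds $\Sigma(U_i)$. This contradicts P3.

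For (ii), I will induct on $i$, using the conventions $B_0 := \Z_p^k$ and $d_0 := k$. For $i = 1$: if $d_1 = k$ then $B_1 = \Z_p^k$, so $\Sigma(U_1) \supseteq B_1 = \Z_p^k$, and \Cref{lem:sigmall} applied to $\cG_1$ produces a zero-sum cycle, a contradiction. For $i \ge 2$: part (i) at index $i-1$ forces every edge of $\G$ outside $\{v_0\} \cup V_1 \cup \ldots \cup V_{i-1}$ to have weight in $B_{i-1}$; in particular, each $g \in \cG_i$ has $V(g) \subseteq V_i$ lying in this region, so $g^* \in B_{i-1}$, and hence $\Sigma(U_i) \subseteq B_{i-1}$. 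Combined with the always-true $B_i \subseteq \Sigma(U_i)$, this yields $B_i \subseteq B_{i-1}$. If equality held, then $\Sigma(U_i) = B_{i-1}$, and applying \Cref{lem:sigmall} to $\cG_i$ inside the complete subdigraph on $V(\G) \setminus (\{v_0\} \cup V_1 \cup \ldots \cup V_{i-1})$, which carries $B_{i-1}$-valued weights, gives a zero-sum cycle of $\G$, contradicting the hypothesis. Hence $B_i \subsetneq B_{i-1}$, i.e.\ $d_i < d_{i-1}$.

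The main obstacle I anticipate is the bookkeeping in (i): checking that the modified collection is a legitimate competitor in P3, and pinning down why the new value $g^*$ being outside the stabilizer $B_i$ strictly (not just weakly) increases $|\Sigma(U_i)|$. Once (i) is established, part (ii) is almost automatic, since (i) confines the weights of the relevant truncated subdigraph to the subgroup $B_{i-1}$, at which point the equality $\Sigma(U_i) = B_{i-1}$ immediately activates \Cref{lem:sigmall} over $B_{i-1}$ to deliver the desired contradiction.
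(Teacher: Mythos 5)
Your proof is correct and follows essentially the same route as the paper: part (i) via the three-vertex gadget through $v_0$ whose value equals $w(e)$, contradicting the lexicographic maximality \ref{itm:gadget-lex}, and part (ii) via the chain $B_i \subseteq \Sigma(U_i) \subseteq B_{i-1}$ together with \Cref{lem:sigmall}. Your extra bookkeeping in (i) — discarding gadgets from $\cG_j$, $j>i$, that meet $\{x,y\}$ so the competitor genuinely satisfies \ref{itm:gadget-disjoint} — is a detail the paper leaves implicit, and is a welcome addition.
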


		\begin{proof}

			Suppose that $e = uv$ violates \ref{it:wtsinB} for some $i \in [t]$. Then the gadget $g$ with $V(g) = \{u, v, v_0\}$, $P(g) = v_0 v$ and $Q(g) = v_0 u v$ has value 
			\begin{equation*}
				w(Q(g)) - w(P(g)) = w(v_0uv) - w(v_0v) = w(uv),
			\end{equation*}
			which is not in $B_i$.
			By definition of $B_i$, this shows $|\Sigma(U_i \cup \{g^*\})| > |\Sigma(U_i)|$. Thus, taking $\cG_i' := \cG_i \cup \{g\}$, we reach a contradiction to property \ref{itm:gadget-lex}.

			Define for convenience $B_0 = \Z_p^k$ and $d_0 = k$, and let $i \in [t-1]$.
			For \ref{it:Bdec}, by \ref{it:wtsinB} we have $U_i \subseteq B_{i-1}$ (this holds trivially for $i = 1$). It follows that $B_{i} \subseteq \Sigma(U_{i}) \subseteq B_{i-1}$ (where the first inclusion holds by definition of $B_i$ and the second holds because $B_{i-1}$ is a group), and thus $d_{i} \le d_{i-1}$. 
			Suppose, towards a contradiction, that $d_{i} = d_{i-1}$. Then $B_{i-1} = B_{i}$ and, by $B_{i} \subseteq \Sigma(U_{i}) \subseteq B_{i-1}$, we get $B_{i} = \Sigma(U_{i})$.
			But now applying Lemma~\ref{lem:sigmall} with the subgraph $\Gamma[V_{i}]$, whose edges are weighted by elements of $\Sigma(U_{i})$, yields a zero-sum cycle. This contradicts our assumption that $\G$ contains no zero-sum cycles. Hence $d_{i-1} > d_{i}$, proving \ref{it:Bdec}. 
		\end{proof}

\section{Zero-sum cycles and reduced sets}\label{sec:proof}

	Given a prime $p$ and integer $k$, recall that $f(\Z_p^k)$ is the minimum $f$ such that every complete digraph on $f$ vertices, whose edges have weights in $\Z_p^k$, has a zero-sum cycle. For ease of notation, write $f_p(k):= f(\Z_p^k)$. Recall that $h_p(k)$ is the maximum size of a reduced multiset in $\Z_p^k$.

	The main result of this section is the following lemma, which together with \Cref{thm:reduced-upper} yields \Cref{thm:main}.

	\begin{lemma}\label{lem:step}
		Let $p$ be a prime and let $k \ge 1$ be integer. Then 
		\begin{equation} \label{eqn:step}
			f_p(k) \le 60 \log (2k) \cdot h_p(10k).
		\end{equation}
	\end{lemma}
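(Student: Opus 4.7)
The plan is to prove the lemma by induction on $k$. The base case $k = 1$ follows from any linear bound on $f(\Z_p)$ (e.g.\ $f(\Z_p) \le 2p - 1$ from \cite{BBK22}), because $60\, h_p(10) \ge 60 \cdot 10(p-1)$ by \Cref{obs:reduced}~\ref{itm:reduced-lower-bound}.

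For the inductive step I set $N = 60 \log(2k)\, h_p(10k)$ and suppose for contradiction that a complete digraph $\Gamma$ on $N$ vertices with weights $w : E(\Gamma) \to \Z_p^k$ has no zero-sum cycle. Using \Cref{lem:zero-vertex} I fix $v_0 \in V(\Gamma)$ with $w(v_0 u) = 0$ for every $u$, and build a useful reduced family of gadgets $\cG_1, \ldots, \cG_T$ on $V(\Gamma) \setminus \{v_0\}$, choosing $T$ to be the smallest index with $d_T \le \floor{k/2}$. By \Cref{lem:gadseq} the stabilizer dimensions satisfy $k > d_1 > \cdots > d_T$, and every edge outside $\{v_0\} \cup \bigcup_{i \le T} V_i$ has weight in the subgroup $B_T \cong \Z_p^{d_T}$.

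Viewing the leftover subdigraph as weighted in $B_T$, the inductive hypothesis gives a zero-sum cycle once at least $f_p(d_T) \le 60 \log(2 d_T)\, h_p(10 d_T)$ vertices remain. Combining $d_T \le k/2$ with the subadditivity $2\, h_p(5k) \le h_p(10k)$ from \Cref{obs:reduced}~\ref{itm:reduced-sub-additive}, a short computation yields $N - f_p(d_T) \ge 30 \log(2k)\, h_p(10k)$. Since each $U_i$ is reduced inside $B_{i-1}$ we get the free bound $|U_i| \le h_p(d_{i-1}) \le h_p(10k)$; hence the inductive step closes provided the construction satisfies $T \le 10 \log(2k)$.

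Securing this bound on $T$ is the main obstacle. \Cref{lem:gadseq} by itself only guarantees $d_i < d_{i-1}$, which leaves room for $T$ as large as $k$; in particular when $p = 2$ a purely lex-maximal construction forces $d_i = d_{i-1} - 1$ and so yields only the $O(k^2)$-type bound of Sidorenko and Steiner. I would attack this via a step-by-step dichotomy: either the chosen $\cG_i$ already gives $d_i \le d_{i-1}/2$ and the dimensions decay geometrically (producing $T = O(\log k)$ automatically), or else one argues that $|U_i|$ is so large that \Cref{cor:matroid} packs $\Omega(\log d_{i-1})$ pairwise disjoint bases of $B_{i-1}$ inside $U_i$. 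In the latter case \Cref{thm:additive-base} gives $\Sigma(U_i) = B_{i-1}$, and applying \Cref{lem:sigmall} to $\Gamma[V_i]$ (whose edges lie in $B_{i-1} = \Sigma(U_i)$ by part \ref{it:wtsinB} of \Cref{lem:gadseq}) directly produces a zero-sum cycle, contradicting the standing assumption. The inflation factor $10$ appearing inside $h_p(10k)$ should be precisely the cushion one needs so that the matroid-packing threshold is met whenever the geometric decay fails, making the dichotomy close uniformly over the range $k/2 < d_{i-1} \le k$.
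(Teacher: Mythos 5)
Your setup (induction on $k$, normalising $v_0$ with \Cref{lem:zero-vertex}, extracting a useful reduced family and recursing into a stabiliser subgroup, closing the arithmetic with \Cref{obs:reduced}~\ref{itm:reduced-sub-additive}) matches the paper, and you have correctly isolated the crux: bounding the number of rounds. But the dichotomy you propose to resolve it does not work. In the branch ``$d_i > d_{i-1}/2$'' there is no mechanism forcing $|U_i|$ to be large: property \ref{itm:gadget-lex} maximises $|\Sigma(U_i)|$, not $|U_i|$, and a small multiset can have a large sumset with a high-dimensional stabiliser. Worse, you impose \ref{itm:gadget-reduced}, and a reduced multiset is essentially never large enough to contain the $(p-1)\log d_{i-1}+p-2$ pairwise disjoint bases of $B_{i-1}$ that \Cref{thm:additive-base} needs --- that is exactly the content of \Cref{thm:reduced-upper}, whose proof shows that a reduced set meeting the packing threshold of \Cref{cor:matroid} would have full sumset and hence not be reduced. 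The case $p=2$ is the cleanest counterexample to your dichotomy: there reduced means linearly independent (\Cref{obs:reduced}~\ref{itm:reduced-p-2}), so $|U_i|\le d_{i-1}$ and $U_i$ cannot contain even two disjoint bases of $B_{i-1}$, while lex-maximality typically forces $d_i=d_{i-1}-1$, so neither branch fires and $T$ can be linear in $k$. Your proposal therefore does not improve on the $O(k^2)$-type bound you yourself flag.

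The paper escapes this differently: it never forces the dimensions to decay geometrically. It takes $t=\lceil 10\log k\rceil$ families and uses only the strict decrease $k>d_1>\dots>d_t$ from \Cref{lem:gadseq} to find, by pigeonhole on the increasing codimensions (\Cref{cl:m1}), an index $m$ with $k-d_m\le 10(k-d_{m-2})$. Crucially, it then does \emph{not} delete all of $\bigcup_{i\le m}V_i$; instead, from each earlier family $\cG_j$ it keeps only small cores $\cX_j,\cY_j$ whose values generate the projections of $\Sigma(U_j)$ to the quotients $\Z_p^k/B_m$ and $\Z_p^k/B_{m-1}$, so each has size at most $h_p(k-d_m)$, and an exchange argument (\Cref{cl:Z-edges}) shows every edge avoiding this small set $Z$ has weight in $B_{m-2}$. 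The recursion then charges $|Z|$ to $h_p\big(10(k-d_{m-2})\big)$ and the recursive call to $h_p(10d_{m-2})$, and subadditivity combines them. If you want to salvage your approach, you need either this ``plateau plus small cores'' idea or some genuinely new way to lower-bound the dimension drop per round; as written, the inductive step is not established.
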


	Before proving \Cref{lem:step}, we show how it implies \Cref{thm:main} and \Cref{thm:pis2}.

	\begin{proof}[Proof of \Cref{thm:main}]
		By \Cref{lem:step} and \Cref{thm:reduced-upper}, we have
		\begin{align*}
			f_p(k) &\le 60 \log (2k) \cdot h_p(10k) \\
			&\le 60 \log (2k) \cdot (p-1) (\log (10 k) + 1) \cdot (10k)\\
			&\le 600 p \cdot k (\log (10k))^2,
		\end{align*}
		as required. 
	\end{proof}

	\begin{proof}[Proof of \Cref{thm:pis2}]
		By \Cref{obs:reduced}\ref{itm:reduced-dim-1} and \Cref{thm:reduced-upper}, we have
		\begin{align*}
			f_2(k) &\le 60 \log (2k) \cdot h_2(10k) 
			\le 60 \log (2k) \cdot 10k = 600 \cdot k\log(2k).
		\end{align*}
		as required. 
	\end{proof}

	\begin{proof}[Proof of Lemma~\ref{lem:step}]

		We prove the lemma by induction on $k$. Notice that, by \cite{alon2020divisible,MS21}, $f_p(1) \le 2p$, and by \Cref{obs:reduced}\ref{itm:reduced-p-2} we have $60 \cdot h_p(10) \ge 60 \cdot h_p(1) = 60(p-1)$, so the statement holds for $k = 1$.

		Let $\G$ be a complete digraph on $n$ vertices that does not contain a zero-sum cycle. Our goal is to show that $n \le 60 \log(2k) \cdot h_p(10k)$. Suppose to the contrary that $n \ge 60 \log(2k) \cdot h_p(10 k)$. 

		Set $t := \ceil{10 \log k}$.
		Choose $\cG_1, \ldots, \cG_t$ to be a collection of families of gadgets in $\G$ satisfying \ref{itm:gadget-disjoint} and \ref{itm:gadget-size}, such that the sequence $|\Sigma(U_1)|, \ldots, |\Sigma(U_t)|$ is as late as possible in the lexicographic order among all such sequences, where $U_i$ is as in \ref{itm:gadget-lex}, so that \ref{itm:gadget-lex} holds. Notice that such a sequence exists (taking $\cG_i = \emptyset$ yields a valid sequence). By replacing $U_i$ by a minimal subset $U_i' \subseteq U_i$ satisfying $\Sigma(U_i') = \Sigma(U_i)$, we may additionally assume that \ref{itm:gadget-reduced} holds. Define $V_i, B_i, d_i$ as in \eqref{eq:VBd}.

		For each $i \in [t]$, as $U_i$ is reduced, $|U_i| \le h_p(k)$  and thus $|V_i| \le 3h_p(k)$ (by \ref{itm:gadget-size}), showing $|\bigcup_{i \in [t]} V_i| \le 3t \cdot h_p(k) \le n-1$. 
		Let $v_0$ be a vertex not in $\bigcup_{i \in [t]}V_i$. Apply \Cref{lem:zero-vertex} to obtain $w' : E(\G) \to \Z_p^k$ such that: $w'(v_0 u) = 0$ for every vertex $u \neq v_0$; there are no $w'$-zero-sum cycles; and every gadget in $\G$ has the same weight with respect to $w$ and $w'$. In particular, a sequence of gadgets is useful with respect to $w$ if and only if it is useful with respect to $w'$, and similarly for useful and reduced sequences. Altogether, this means that we may and will replace $w$ by $w'$, allowing us to assume $w(v_0u) = 0$ for $u \in V(\G) \setminus \{v_0\}$.

		Hence we can apply Lemma~\ref{lem:gadseq} to obtain:

		\begin{enumerate}[label = \rm(\roman*)]
			\item \label{itm:d-a}
				For each $i \in [t]$, every edge $e$ in $\G \setminus (\bigcup_{j=1}^i V_j \cup \{v_0\})$ satisfies $w(e) \in B_i$.
			\item \label{itm:d-b}
				We have $k > d_1 > \ldots > d_t$. In particular, $0 < d_i < k$ for $i \in [t-1]$.
		\end{enumerate}

		\begin{claim}\label{cl:m1}
			There exists $m \in [3, t]$ such that $k - d_m \le 10(k-d_{m-2})$.
		\end{claim}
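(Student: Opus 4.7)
The plan is to argue by contradiction. Setting $a_i := k - d_i$, the claim becomes the existence of $m \in [3,t]$ with $a_m \le 10 \, a_{m-2}$, so I would assume instead that $a_m > 10 \, a_{m-2}$ for every $m \in [3,t]$. By item \ref{itm:d-b}, the sequence $a_1 < a_2 < \ldots < a_{t-1}$ consists of positive integers strictly less than $k$ (note that $a_t$ is not constrained this way, so the argument must only use indices up to $t-1$), and this upper bound is the resource that should yield the contradiction.

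The key step is to iterate the assumed inequality along the odd-indexed and even-indexed subsequences of $(a_i)$ independently. Using $a_1, a_2 \ge 1$, this produces $a_{2j+1} > 10^j a_1 \ge 10^j$ and $a_{2j+2} > 10^j a_2 \ge 10^j$; specializing to the largest index $m \le t-1$ of each parity, I obtain $a_{t-1} > 10^{(t-3)/2}$ regardless of the parity of $t$. Combining this with $a_{t-1} < k$ rearranges to
\begin{equation*}
t \;<\; \frac{2 \log k}{\log 10} + 3.
\end{equation*}

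However, $t = \lceil 10 \log k \rceil \ge 10 \log k$, and since $\log 10 > 3$ the expression $10 \log k - 2 \log k / \log 10$ is at least $(10 - 2/3) \log k = (28/3) \log k$, which exceeds $3$ for every $k \ge 2$. Since the induction in Lemma~\ref{lem:step} invokes this claim only for $k \ge 2$ (with $k=1$ handled as the induction base), we reach the required contradiction.

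The only potential obstacle is bookkeeping the parity of $t$ inside the iteration, but the bound $10^{(t-3)/2}$ works uniformly in both parities and is already far stronger than the trivial ceiling $k$, so the chosen constant $t = \lceil 10 \log k \rceil$ leaves ample slack.
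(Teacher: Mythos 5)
Your proof is correct and follows essentially the same route as the paper: assume the inequality fails for all $m \in [3,t]$, iterate it along a parity chain to force $k - d_i$ to grow geometrically like $10^{\Theta(t)}$, and contradict the trivial bound $k - d_i \le k$ using $t = \lceil 10\log k\rceil$. The only (harmless) difference is that the paper runs the chain up to index $t$ itself, using $d_t \ge 0$, whereas you stop at $t-1$; both give ample slack.
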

		\begin{proof}
			Suppose $k - d_m > 10(k-d_{m-2})$ for all $m \in [3,t]$. 
			Then, as $1 \le k - d_1 \le k - d_2 \le k$,
			$$k \ge k - d_t > 10^{t/2-1}(k - d_2) \ge 10^{t/2-1} \ge k^5/10 > k,$$
			a contradiction.
		\end{proof}

		Fix $m$ as in Claim~\ref{cl:m1}. Our next goal is to show that there is a small set $Z$ of vertices from $\G$ such that the weights of all edges not touching $Z$ lie in a proper subgroup of $\Z_p^k$. We can then apply our inductive hypothesis to bound $|V(\G \setminus Z)|$.

		For each $i \in [t]$, let $\tau_i : \Z_p^k \to \Z_p^k / B_i$ be the natural map sending $a$ to $a + B_i$ for $a \in \Z_p^k$. For a multisubset $X$ of $\Z_p^k$, let $\tau_i(X):= \bigmcup_{x \in X}\tau_i(x)$. 

		For each $j \in [m-2]$, define $\mathcal{X}_j$ to be a minimal subset of $\mathcal{G}_j$ such that $X_j:= \bigmcup_{g \in \mathcal{X}_j}g^*$ satisfies $\Sigma(\tau_m(X_j)) = \Sigma(\tau_m(U_j))$. Similarly, define $\mathcal{Y}_j$ to be a minimal set of gadgets in $\mathcal{G}_j$ such that $Y_j:= \bigmcup_{g \in \mathcal{Y}_j}g^*$ satisfies $\Sigma(\tau_{m-1}(Y_j)) = \Sigma(\tau_{m-1}(U_j))$. 
		Observe that $\tau_m(X_j)$ is reduced in $\Z_p^k/B_m$ and $\tau_{m-1}(Y_j)$ is reduced in $\Z_p^k / B_{m-1}$.
		In particular, 
		\begin{equation} \label{eqn:reduced}
			|\cX_j| \le h_p(k - d_m), \qquad |\cY_j| \le h_p(k - d_{m-1}).
		\end{equation}
		Notice that, by definition of $\cX_j$ and $\cY_j$,
		\begin{equation} \label{eqn:X-Y}
			\Sigma(U_j) \subseteq \Sigma(X_j \mcup U_m), \qquad
			\Sigma(U_j) \subseteq \Sigma(Y_j \mcup U_{m-1}).
		\end{equation}
		Indeed, if $u \in \Sigma(U_j)$ then there exist $x \in \Sigma(X_j)$ and $v \in B_m$ such that $u = x + v$, showing $\Sigma(U_j) \subseteq \Sigma(X_j) + B_m \subseteq \Sigma(X_j) + \Sigma(U_m) = \Sigma(X_j \mcup U_m)$. A similar argument holds for $Y_j$. 
		Define
		\begin{equation} \label{eqn:Z}
			Z := \{v_0\} \cup \bigcup_{i \in [m-2]} \big(V(\cX_i) \cup V(\cY_i)\big),
		\end{equation}
		where $V(\cX_i)$ denotes the union of vertex sets of gadgets in $\cX_i$, and similarly for $V(\cY_i)$.

		\begin{claim}\label{cl:z-size}
			$|Z| \le 6m \cdot h_p(k - d_m)$.
		\end{claim}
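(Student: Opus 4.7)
The plan is a direct counting argument from the definition of $Z$, using three ingredients: property \ref{itm:gadget-size}, the cardinality bounds \eqref{eqn:reduced}, and monotonicity of $h_p$.

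First, by property \ref{itm:gadget-size} every gadget has exactly $3$ vertices, so for each $i \in [m-2]$,
\begin{equation*}
    |V(\cX_i)| \le 3|\cX_i|, \qquad |V(\cY_i)| \le 3|\cY_i|.
\end{equation*}
Combining with \eqref{eqn:reduced} gives $|V(\cX_i)| \le 3 h_p(k-d_m)$ and $|V(\cY_i)| \le 3 h_p(k-d_{m-1})$. To unify these two bounds I would invoke monotonicity of $h_p$, which is not stated explicitly in \Cref{obs:reduced} but follows by applying part~\ref{itm:reduced-sub-additive} with $\ell = 1$ and using part~\ref{itm:reduced-dim-1}:
\begin{equation*}
    h_p(j+1) \ge h_p(j) + h_p(1) = h_p(j) + (p-1) \ge h_p(j).
\end{equation*}
Combined with $d_{m-1} > d_m$ from \Cref{lem:gadseq}\ref{it:Bdec}, this yields $h_p(k-d_{m-1}) \le h_p(k-d_m)$.

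Summing over $i \in [m-2]$ and adding the single vertex $v_0$ then gives
\begin{equation*}
    |Z| \le 1 + 6(m-2) h_p(k-d_m) \le 6m \cdot h_p(k-d_m),
\end{equation*}
where the last inequality uses $h_p(k-d_m) \ge h_p(1) = p-1 \ge 1$ (valid because $d_m < k$, as already recorded in the excerpt via \Cref{lem:gadseq}\ref{it:Bdec}). I do not expect any serious obstacle here; the whole argument is essentially bookkeeping, and the only point worth flagging is that monotonicity of $h_p$ must be extracted from the observations in \Cref{obs:reduced} rather than cited directly.
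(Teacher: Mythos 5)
Your argument is correct and is essentially identical to the paper's proof: both bound $|Z|$ by $1 + 3\sum_{j}(|\cX_j|+|\cY_j|)$ via property \ref{itm:gadget-size}, apply \eqref{eqn:reduced}, and use $d_{m-1} > d_m$ together with monotonicity of $h_p$ to absorb everything into $6m \cdot h_p(k-d_m)$. Your explicit derivation of monotonicity from \Cref{obs:reduced}~\ref{itm:reduced-sub-additive} and \ref{itm:reduced-dim-1} is a nice touch that the paper leaves implicit.
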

		\begin{proof}
			We have
			\begin{align*}
				|Z| 
				& \le 3\sum_{j \in [m-2]} \big(|\cX_j| + |\cY_j|\big) + 1 \\
				& \le 3\sum_{j \in [m-2]}\big(h_p(k - d_m) + h_p(k - d_{m-1})\big) + 1 \\
				& \le 6 m \cdot h_p(k - d_m),
			\end{align*}
			where for the first inequality we used that gadgets consist of three vertices, for the second we used \eqref{eqn:reduced}, and for the third we used \ref{itm:d-b} and the monotonicity of $h_p(\cdot)$.
		\end{proof}

		\begin{claim}\label{cl:Z-edges}
			Every edge $e \in E(\G \setminus Z)$ satisfies $w(e) \in B_{m-2}$.
		\end{claim}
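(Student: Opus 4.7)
The plan is to argue by contradiction: suppose there exists an edge $e = uv \in E(\G \setminus Z)$ with $w(e) \notin B_{m-2}$, and derive a contradiction with the lexicographic maximality in~\ref{itm:gadget-lex}. As in the proof of~\cref{lem:gadseq}\ref{it:wtsinB}, I form the gadget $\tilde g$ on vertex set $\{v_0, u, v\}$ with $P(\tilde g) = v_0 v$ and $Q(\tilde g) = v_0 u v$; since $w(v_0 x) = 0$ for every $x$, we obtain $\tilde g^* = w(uv) = w(e) \notin B_{m-2}$.

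Since the subgroups $B_1 \supseteq \cdots \supseteq B_t$ are nested, let $i^* \in [t]$ be the smallest index with $\tilde g^* \notin B_{i^*}$; then $i^* \leq m-2$, and by the definition of $B_{i^*}$ adding $\tilde g$ to $\cG_{i^*}$ strictly enlarges $\Sigma(U_{i^*})$. The natural modification is then $\cG_j' = \cG_j$ for $j < i^*$, $\cG_{i^*}' = \cG_{i^*} \cup \{\tilde g\}$, and $\cG_j'$ obtained from $\cG_j$ by removing gadgets that conflict with $\tilde g$ (i.e., contain $u$ or $v$) for $j \geq i^*$. If the result satisfies~\ref{itm:gadget-disjoint} and~\ref{itm:gadget-size}, its value-sequence matches $(|\Sigma(U_i)|)_{i=1}^{t}$ at positions $< i^*$ and is strictly larger at position $i^*$, contradicting~\ref{itm:gadget-lex}.

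Thus the whole argument boils down to vertex-disjointness. The vertex $v_0$ lies in no $V_j$; if $u$ or $v$ lies in $V_j$ for some $j$, then since $u, v \notin Z$, for $j \leq m-2$ any gadget in $\cG_j$ containing $u$ or $v$ must lie in $\cG_j \setminus (\cX_j \cup \cY_j)$ (because $V(\cX_j) \cup V(\cY_j) \subseteq Z$). A conflict at position $j > i^*$ is handled by simply removing the offending gadget, which only touches positions after $i^*$. A conflict at $j = i^*$ forces a swap inside $\cG_{i^*}$; verifying that the swap still strictly increases $|\Sigma(U_{i^*})|$ is routine, using that removing a gadget outside $\cX_{i^*} \cup \cY_{i^*}$ leaves the projections $\Sigma(\tau_m(U_{i^*}))$ and $\Sigma(\tau_{m-1}(U_{i^*}))$ intact.

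The genuinely hard case is a conflict at a position $j < i^*$: here $g^u \in \cG_j \setminus (\cX_j \cup \cY_j)$, and removing it would strictly decrease $|\Sigma(U_j)|$ (since $U_j$ is reduced) at a position that must stay fixed. The main technical step of the proof is a compensating swap that preserves $|\Sigma(U_j)|$ for each such $j$. This exploits that, by minimality of $\cX_j$ and $\cY_j$, removing a gadget in $\cG_j \setminus (\cX_j \cup \cY_j)$ leaves $\Sigma(\tau_m(U_j))$ and $\Sigma(\tau_{m-1}(U_j))$ unchanged; combined with the inclusions $\Sigma(U_j) \subseteq \Sigma(X_j \mcup U_m)$ and $\Sigma(U_j) \subseteq \Sigma(Y_j \mcup U_{m-1})$ from~\eqref{eqn:X-Y}, the lost subset sums can be rebuilt via combinations of gadgets drawn from $\cG_{m-1}$ and $\cG_m$ together with previously unused vertices. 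Carrying out this compensation cleanly is the main obstacle; once it is in place, positions $< i^*$ are preserved exactly, position $i^*$ strictly increases, and the desired contradiction with~\ref{itm:gadget-lex} follows.
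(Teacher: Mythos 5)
You have the right setup (the gadget through $v_0$ with value $w(e)\notin B_{m-2}$, and the plan to contradict \ref{itm:gadget-lex}), and you have correctly located the crux: when $u$ or $v$ lies in some $V_j$ with $j\le m-2$, deleting the offending gadget from the reduced family $\cG_j$ strictly shrinks $\Sigma(U_j)$ at a position that must not decrease. But the proposal stops exactly there — you write that ``carrying out this compensation cleanly is the main obstacle'' — and that compensation is the entire content of the claim, so as written this is a gap, not a proof. Moreover, the one concrete suggestion you make for the position $j=i^*$ (that preserving the projections $\Sigma(\tau_m(U_{i^*}))$ and $\Sigma(\tau_{m-1}(U_{i^*}))$ makes the swap ``routine'') does not suffice: controlling the images under $\tau_m$ and $\tau_{m-1}$ says nothing about $\Sigma(U_{i^*})$ itself unless you also re-supply the cosets of $B_m$ (resp.\ $B_{m-1}$), and no ``previously unused vertices'' will do that for you.

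The missing construction is the following. At most two indices $a,b\in[m]\cup\{\infty\}$ (with $u\in V_a$, $v\in V_b$) can conflict with the new gadget. For each conflicting index $j\le m-2$, replace $\cG_j$ wholesale by $\cX_j\cup\cY_j\cup\cG_{m'}$, where $\cG_{m'}$ is one of the two \emph{entire} spare families $\cG_{m-1},\cG_m$, assigning distinct spares to $a$ and $b$ and making sure not to import a spare family that itself contains the conflicting vertex (this is the role of the $m',m''$ bookkeeping). Vertex-disjointness then holds because $V(\cX_j)\cup V(\cY_j)\subseteq Z$ while $u,v\notin Z$, and \eqref{eqn:X-Y} gives $\Sigma(U_j)\subseteq\Sigma(X_j\mcup Y_j\mcup U_{m'})$, i.e.\ the new family at position $j$ satisfies $|\Sigma(U_j')|\ge|\Sigma(U_j)|$ — note that $\ge$ suffices for the lexicographic argument; you do not need exact preservation as you assumed. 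This is also why the paper inserts the new gadget at position $m-2$ rather than at your minimal index $i^*$: with the strict increase placed at $m-2$, the families $\cG_{m-1}$ and $\cG_m$ sit at positions that no longer matter lexicographically and can be freely cannibalised as compensation. Your variant (inserting at $i^*$) could be made to work with the same device, but without spelling out this replacement the argument is incomplete.
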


		\def \Vi {V_{\infty}}
		\begin{proof}
			Suppose there exists an edge $xy$ with no endpoint in $Z$ such that $w(xy) \notin B_{m-2}$. 
			Define, for convenience, $\Vi := V(\G) \setminus \bigcup_{i \in [m]} V_i$. Let $g$ be the gadget rooted at $(v_0,y)$ with $V(g) = \{v_0,x,y\}$, $P(g) = v_0y$, and $Q(g) = v_0 xy$. Its value is
			\begin{equation*}
				g^* = w(Q(g)) - w(P(g)) 
				= w(v_0xy) - w(v_0y)
				= w(v_0x) + w(xy) - w(v_0y) = w(xy),
			\end{equation*}
			using $w(v_0u) = 0$ for $u \neq v_0$. As $w(xy) \notin B_{m-2}$, it follows that $g^* \notin B_{m-2}$.

			We will show that it is possible to find a collection of families of vertex-disjoint gadgets $\cG'_1, \ldots, \cG'_{m-3}, \cG''_{m-2}$ in $\G$ with corresponding sets $U'_1,\ldots,U_{m-3}',U''_{m-2}$ (where $U'_i := \bigmcup_{g \in \cG'_i}g^*$ and similarly for $U''_{m-2}$) satisfying properties \ref{itm:gadget-disjoint} and \ref{itm:gadget-size} with the additional conditions
			\begin{enumerate}[label = \rm(C\arabic*)]
				\item \label{itm:lex-equal}
					$|\Sigma(U'_j)| \ge |\Sigma(U_j)|$ for all $j \in [m-3]$, and 
				\item \label{itm:lex-larger}
					$|\Sigma(U''_{m-2})| > |\Sigma(U_{m-2})|$. 
			\end{enumerate}
			This will imply that $\cG_1, \ldots, \cG_t$ does not satisfy property \ref{itm:gadget-lex}, as we are able to obtain a sequence that is later in the lexicographic ordering, a contradiction.

			Suppose $x \in V_a$ and $y \in V_b$, where $a, b \in [m] \cup \{\infty\}$ and $a \le b$. We need an ad-hoc piece of notation; let $m', m''$ satisfy $\{m', m''\} = \{m-1, m\}$ and, if $b \in \{m-1, m\}$ but $a \notin \{m-1, m\}$, then $m' \neq b$.
			We define $\cG'_1, \ldots, \cG'_{m-2}$ as follows. 
			$$\cG'_j := 
			\begin{cases}
				\cG_j & \text{ if $j \in [m-2] \setminus \{a,b\}$,} \\
				\mathcal{X}_j \cup \cY_j \cup \cG_{m'} & \text{ if $j \in [m-2]$ and $j = a$} \\
				\cX_j \cup \mathcal{Y}_j \cup \cG_{m''} & \text{ if $j \in [m-2]$ and $j = b > a$.}
			\end{cases}
			$$
			Notice that the gadgets in each $\cG_j$, with $j \in [m-2]$, are pairwise vertex-disjoint. Indeed, this holds because $V(\cX_j), V(\cY_j) \subseteq V_j$ and the sets $V_j$ are pairwise vertex-disjoint. 
			Similarly, any two gadgets from distinct sets $\cG_j'$ are vertex-disjoint. 
			Additionally, the gadgets in $\bigcup_{j \in [m-2]} \cG_j'$ are vertex-disjoint from $(V_a \cup V_b \cup V_{\infty}) \setminus Z$ (if $b \in \{m-1, m\}$ and $a \notin \{m-1, m\}$ this follows from the choice of $m'$), and hence they are vertex-disjoint from $V(g) = \{x,y,v_0\}$. Altogether, writing $\cG_{m-2}'' := \cG_{m-2}' \cup \{g\}$, this shows that $\cG_1', \ldots, \cG_{m-3}', \cG_{m-2}''$ satisfies \ref{itm:gadget-disjoint}. Property \ref{itm:gadget-size} holds by construction.

			Next, we claim that $\Sigma(U_j') = \Sigma(U_j)$ for $j \in [m-2]$, where $U'_{m-2} := \bigmcup_{g \in \cG'_{m-2}}g^*$. This is clear when $j \neq a, b$. Suppose now that $\cG_j' = \cX_j \cup \cY_j \cup \cG_{m-1}$.  If $u \in \Sigma(U_j)$, then by choice of $\cY_j$ there exists $w \in \Sigma(Y_j)$ such that $u + B_{m-1} = w + B_{m-1}$. Equivalently, $u - w \in B_{m-1}$ and thus $u - w \in \Sigma(U_{m-1})$. This implies that $u \in \Sigma(Y_j \mcup U_{m-1}) \subseteq \Sigma(U_j')$, as claimed. An analogous argument shows the same when $\cG_j' = \cX_j \cup \cY_j \cup \cG_m$. Hence $|\Sigma(U'_j)| \ge |\Sigma(U_j)|$ for all $j \in [m-2]$, which proves \ref{itm:lex-equal}.

			In particular, $\Sigma(U_{m-2}) \subseteq \Sigma(U_{m-2}')$. This, the definition of $B_{m-2}$, and that $g^* \notin B_{m-2}$ show that
			\begin{equation*}
				\Sigma(U_{m-2}'') = \Sigma(U_{m-2}' \mcup \{g^*\})  
				\supseteq \Sigma(U_{m-2} \mcup \{g^*\}) \supsetneq \Sigma(U_{m-2}).
			\end{equation*}
			Thus \ref{itm:lex-larger} holds.
		\end{proof}

		Claims~\ref{cl:z-size} and \ref{cl:Z-edges} show that, by removing at most $6m \cdot h_p(k - d_m)$ vertices from $\G$, we can obtain a graph whose edge weights are contained in a subgroup isomorphic to $\Z_p^{d_{m-2}}$. But as we presumed $\G$ has no zero-sum cycle, then $|V(\G \setminus Z)| \le f_p(d_{m-2})$. Hence
		\begin{align*}
			n = |V(\G)| &= |V(Z)| + |V(\G \setminus Z)|\\
			& \le 6m \cdot h_p(k - d_m) + f_p(d_{m-2}) \\
			& \le 60 \log k \cdot h_p\big(10(k - d_{m-2})\big) + 60 \log(2d_{m-2}) \cdot h_p(10d_{m-2}) \\
			& \le 60 \log (2k) \cdot h_p\big(10(k - d_{m-2})\big) + 60 \log(2k) \cdot h_p(10d_{m-2}) \\
			& \le 60 \log (2k) \cdot h_p(10k).
		\end{align*}
		where in the third line we used the bounds $m \le 10 \log k$ and $k - d_m \le 10(k - d_{m-2})$ and the monotonicity of $h_p(\cdot)$, as well as the induction hypothesis on $f_p(\cdot)$ (using $d_{m-2} \le k-1$; see \ref{itm:d-b}), and in the last line we applied \Cref{obs:reduced}\ref{itm:reduced-sub-additive}. 
	\end{proof}

\section{A few concluding remarks}\label{sec:conc}

	Our main result shows $f(\mathbb{Z}_p^k) = O(pk (\log k)^2)$ when $p$ is prime, with a better bound of $O(k \log k)$ when $p = 2$. This is tight up to a factor which is polylogarithmic in $k$, due to the easy lower bound $f(\Z_p^k) \ge (p-1)k$. It would be nice to close the gap between these upper and lower bound. 
	\begin{question}
		Is it true that $f(\Z_p^k) = O(pk)$?
	\end{question}
	It would also be interesting to determine whether similar bounds hold when $p$ is not prime. 

	\Cref{lem:step} provides a bound on $f_p(k)$ in terms of $h_p(k)$. In light of this, one way to improve \Cref{thm:main} could be to improve our understanding of $h_p(k)$. Indeed, proving that $h_p(k) = O(pk)$ would give an improvement of a $\log(k)$ factor in \Cref{thm:main}. Given that we know (see \Cref{obs:reduced}~\ref{itm:reduced-dim-1}) that $h_p(1)=p-1$, a next step towards understanding $h_p(k)$ could be to determine $h_p(2)$. \Cref{thm:reduced-upper} gives $h_p(2) \le 4(p-1)$. To this end, we can prove the following.

	\begin{restatable}{lemma}{lemReducedUpper} \label{lem:reduced-upper-2}
		Let $p \ge 7$ be prime. Then $h_p(2) < \frac{5}{2}(p-1)$.
	\end{restatable}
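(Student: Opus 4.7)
The plan is to argue by contradiction. Suppose $S \msubseteq \Z_p^2$ is reduced with $|S| \ge \tfrac{5}{2}(p-1)$. By Observation~\ref{obs:reduced}\ref{itm:reduced-subset},\ref{itm:reduced-dim-1}, if $\rank(S) = 1$ then $|S| \le p-1$, so $\rank(S) = 2$, and $S$ decomposes across the $p+1$ lines through the origin as $S = S_1 \sqcup \cdots \sqcup S_r$, where $S_i = S \cap L_i$, $n_i := |S_i|$, arranged so $n_1 \ge n_2 \ge \cdots \ge n_r \ge 1$. Each $S_i$ is reduced in $L_i \cong \Z_p$, so $n_i \le p-1$ and $|\Sigma(S_i)| = n_i + 1$. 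If $r \le 2$ then $|S| \le 2(p-1) < \tfrac{5}{2}(p-1)$, so from now on I assume $r \ge 3$.

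Let $\pi : \Z_p^2 \to \Z_p^2/L_1 \cong \Z_p$ be the quotient and let $T := S \setminus S_1$. Since $L_i \cap L_1 = 0$ for $i \ge 2$, the map $\pi$ is injective on each $L_i$, so $\pi(T)$ is a multiset of $|T|$ nonzero elements of $\Z_p$. A standard iterated Cauchy--Davenport argument (using that $\Z_p$ has no nontrivial proper subgroups, so each added nonzero element strictly enlarges the sumset until it fills $\Z_p$) gives $|\Sigma(U)| \ge \min(p, |U|+1)$ for any multiset $U$ of nonzero elements of $\Z_p$. Applied to $\pi(T)$ and $\pi(T \setminus \{s\})$, both of which have $\ge p-1$ elements since $|T| \ge \tfrac{3}{2}(p-1)$, this yields $\pi(\Sigma(T)) = \pi(\Sigma(T \setminus \{s\})) = \Z_p$. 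This dispatches the subcase $n_1 = p-1$: then $\Sigma(S_1) = L_1$, so $\Sigma(S) = L_1 + \Sigma(T) = \Z_p^2$ and likewise $\Sigma(S \setminus \{s\}) = \Z_p^2$ for any $s \in T$ (which exists because $r \ge 3$), contradicting reducedness.

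For the main case $n_1 \le p-2$ I would extract more information from $\pi$ by tracking the coset structure of $\Sigma(S)$ modulo $L_1$. Let $M := L_1 \setminus \Sigma(S_1)$ (of size $p - 1 - n_1 \ge 1$) and set $m_c := |\Sigma(T) \cap \pi^{-1}(c)|$ for $c \in \Z_p$. Since each $x \in \Sigma(T)$ with $\pi(x) = c$ contributes a translate $\Sigma(S_1) + x$ missing exactly $M + x$ from the coset $L_1 + c$, one has
\[
\Sigma(S) \cap (L_1 + c) \;=\; (L_1 + c) \,\setminus\, \bigcap_{x \in \Sigma(T),\, \pi(x) = c} (M + x).
\]
When $n_1 = p - 2$ (so $|M| = 1$) distinct $x$'s give distinct missing singletons, so a coset with $m_c \ge 2$ is complete and a singleton coset loses exactly one point; hence $|\Sigma(S)| = p^2 - a$, where $a := |\{c : m_c = 1\}|$. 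Writing $|\Sigma(S \setminus \{s\})| = p^2 - a_s$ analogously, reducedness forces $a_s \ge a + 1$ for every $s \in T$: each removal must turn at least one multi-coset into a singleton.

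The final step is a double-counting argument: each new singleton at coset $c$ comes from killing $m_c - 1 \ge 1$ elements of $\Sigma(T)$ that ``require'' $s$, so summing the reducedness constraints over $s \in T$ produces $\ge |T|$ such required elements of $\Sigma(T)$. Comparing this against $|\Sigma(T)| \le p^2$ and the structural lower bound $|\Sigma(T)| \ge (n_2 + 1)(n_3 + 1)$ (since $L_2, L_3$ are independent lines, so $\Sigma(S_2) + \Sigma(S_3)$ is a product set), together with $|T| \ge \tfrac{3p-1}{2}$, should yield the desired contradiction when $p \ge 7$. The subcase $n_1 \le p - 3$ follows the same blueprint but with $|M| \ge 2$, which forces a heavier accounting of how translates overlap in each coset. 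I expect the main obstacle to be precisely this bookkeeping: extracting the constant $\tfrac{5}{2}$ (rather than a weaker $3$) seems to require tracking not only whether $m_c = 1$ versus $m_c \ge 2$, but also the full size distribution of the $m_c$'s, and the hypothesis $p \ge 7$ enters through the quantitative Cauchy--Davenport estimates used in this final aggregation.
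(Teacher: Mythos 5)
Your opening reductions are sound (rank $2$, at least three lines, the quotient map $\pi$, iterated Cauchy--Davenport giving $\pi(\Sigma(T)) = \Z_p$, and the disposal of the subcase $n_1 = p-1$), and this is a genuinely different route from the paper, which instead greedily extracts $2(p-1)$ vectors avoiding a sparse direction and certifies $\Sigma = \Z_p^2$ via the Combinatorial Nullstellensatz (Lemma~\ref{lem:polynomial}). One small inaccuracy along the way: for a reduced multiset $S_i$ in a line you only get $|\Sigma(S_i)| \ge n_i + 1$, not equality (e.g.\ $\{1,2\}$ in $\Z_5$ is reduced with sumset of size $4$); this does not affect the step where you use it, since $n_1 = p-1$ already forces $|\Sigma(S_1)| \ge p$.

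The genuine gap is the main case $n_1 \le p-2$, which is where all the work lies and which you do not complete. The coset formula $\Sigma(S) \cap (L_1+c) = (L_1+c) \setminus \bigcap_{x}(M+x)$ and the deduction $a_s \ge a+1$ for every $s \in T$ are correct for $n_1 = p-2$, but the proposed aggregation does not close. Summing ``at least one killed element of $\Sigma(T)$ per $s$'' over $s \in T$ counts killed elements \emph{with multiplicity}: the same $x \in \Sigma(T)$ can lie outside $\Sigma(T - \{s\})$ for many different $s$, so you do not obtain $|T|$ distinct elements of $\Sigma(T)$, and even if you did, comparing $|T| \approx \tfrac{5}{2}p$ against $|\Sigma(T)| \le p^2$ (or against the lower bound $(n_2+1)(n_3+1)$, which points in the wrong direction for a contradiction) yields nothing. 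The subcase $n_1 \le p-3$, where $|M| \ge 2$ and a coset is incomplete exactly when the translates $M + x$ share a common point, is deferred entirely, and it is not clear the bookkeeping there terminates at the constant $\tfrac{5}{2}$ rather than something weaker. As written this is a plausible plan for the easy half of the lemma plus an unproved program for the hard half; you would need either to find the correct counting invariant for the $n_1 \le p-2$ cases or to switch to a mechanism, like the paper's polynomial method, that certifies $\Sigma(S') = \Z_p^2$ for a proper submultiset $S' \subsetneq S$ directly.
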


	For completeness, we provide a proof in \Cref{sec:appendix-a}. However, we do not think that this bound is best possible. 

	\begin{conjecture}
		 Let $p$ be prime. Then there exists a constant $C$ such that $h_p(2) \le 2p + C$.   
	\end{conjecture}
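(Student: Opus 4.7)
The conjecture asks for a bound $h_p(2) \le 2p + C$ matching, up to an additive constant, the easy lower bound $h_p(2) \ge 2(p-1)$ coming from $(p-1)$ copies of each standard basis vector (Observation~\ref{obs:reduced}\ref{itm:reduced-lower-bound}). This is substantially stronger than the bound $4(p-1)$ from Theorem~\ref{thm:reduced-upper} and the bound $5(p-1)/2$ from Lemma~\ref{lem:reduced-upper-2}. My plan would be to inject genuine additive-combinatorial input (Kneser's theorem, Olson's theorem, and inverse-type statements) rather than rely solely on the matroid and sumset-growth arguments of Section~\ref{sec:reduced}.

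The strategy is to stratify by the stabilizer of the sumset. First, if $\spa(S)$ is contained in a line, Observation~\ref{obs:reduced}\ref{itm:reduced-dim-1} gives $|S| \le p-1$ immediately, so I may assume $\spa(S) = \Z_p^2$. Order $S = \{s_1, \ldots, s_n\}$ arbitrarily and set $S_i = \{s_1, \ldots, s_i\}$ and $H_i = H(\Sigma(S_i))$, the stabilizer. Each $S_i$ is reduced (Observation~\ref{obs:reduced}\ref{itm:reduced-subset}), so $|\Sigma(S_i)| > |\Sigma(S_{i-1})|$. Since the only subgroups of $\Z_p^2$ are $\{0\}$, the $p+1$ lines through the origin, and $\Z_p^2$ itself, Kneser's theorem applied to $\Sigma(S_i) = \Sigma(S_{i-1}) + \{0, s_i\}$ forces the following dichotomy: once $H_i$ contains a line $L$, every subsequent addition enlarges the sumset by at least $p$. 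Consequently, the ``line-stabilizer phase'' of the incremental construction contributes at most $p-1$ steps before $\Sigma(S_i) = \Z_p^2$, and the problem reduces to bounding the length of the preceding ``trivial-stabilizer phase'' by $p + O(1)$.

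The main obstacle lies in controlling this trivial-stabilizer phase, where a priori the sumset may grow by as little as $1$ per step, giving only the useless bound $|S| \le p^2 - 1$. My plan here is to combine Olson's theorem $D(\Z_p^2) = 2p-1$ with a Vosper- or Freiman--Ruzsa-style inverse statement in $\Z_p^2$: once $|S_i| \ge 2p - 1$, the multiset $S_i$ contains a non-empty zero-sum subsequence, and if simultaneously $|\Sigma(S_i)|$ is small relative to $|S_i|$ then $\Sigma(S_i)$ must be concentrated on a coset of a proper subgroup, forcing $H_i$ to be nontrivial and transitioning us into the fast-growth regime. The delicate point is that reducedness is a condition on every $s \in S$ simultaneously, while a single zero-sum subsequence only tells us that $-t \in \Sigma(S - \{t\})$ for $t \in T$; it does not directly witness any $s \in H(\Sigma(S - \{s\}))$. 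I expect one must iterate the extraction of zero-sum subsequences, track the stabilizers of sumsets of nested sub-multisets of $S$ rather than of $\Sigma(S)$ itself, and push a case analysis refining the techniques behind Lemma~\ref{lem:reduced-upper-2}. Obtaining the clean form $2p + C$ with a small explicit $C$ will likely demand a bespoke argument for $\Z_p^2$ rather than a black-box application of any existing inverse theorem, and closing this gap is what I view as the heart of the conjecture.
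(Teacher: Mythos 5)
The statement you are addressing is stated in the paper as an open conjecture: the paper proves only the weaker bounds $h_p(2)\le 4(p-1)$ (\Cref{thm:reduced-upper}) and $h_p(2)<\tfrac52(p-1)$ for $p\ge 7$ (\Cref{lem:reduced-upper-2}), and offers no proof of $h_p(2)\le 2p+C$. Your proposal is likewise not a proof but a strategy outline, and it contains a genuine gap exactly where the conjecture lives. The easy half of your plan is fine: once some $\Sigma(S_i)$ is a union of cosets of a line $L$, every later sumset is again a union of $L$-cosets (this needs only the identity $\Sigma(S_{i+1})=\Sigma(S_i)\cup\bigl(s_{i+1}+\Sigma(S_i)\bigr)$, not Kneser), so reducedness forces growth by at least $p$ per step and that phase lasts at most $p-1$ steps; and once $\Sigma(S_i)=\Z_p^2$ no further element can be added. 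But the entire content of the conjecture is your claim that the trivial-stabilizer phase has length $p+O(1)$, and nothing in your sketch establishes it. Olson's theorem only produces a zero-sum subsequence once $|S_i|\ge 2p-1$, and, as you yourself note, the existence of a zero-sum subsequence neither contradicts reducedness nor forces the stabilizer of $\Sigma(S_i)$ to be nontrivial; no cited inverse result (Vosper, Freiman--Ruzsa) applies to iterated sumsets of two-element sets $\{0,s_j\}$ in $\Z_p^2$ in the range needed to conclude ``small sumset $\Rightarrow$ nontrivial stabilizer'' after only $p+O(1)$ steps. Your closing sentence concedes precisely this, so what you have is a plausible reduction of the conjecture to an unproven additive-combinatorial statement, not a proof.

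For comparison, the paper's best partial result, \Cref{lem:reduced-upper-2}, proceeds differently: it normalises $S$ so that few elements lie in the direction $(0,1)$, greedily extracts $p-1$ elements by direction, bounds the maximal number of remaining elements in any single direction (Claim~\ref{cl:m-bound}), and then uses the Combinatorial Nullstellensatz via \Cref{lem:polynomial} to show a proper sub-multiset of size $2(p-1)$ already has full sumset, contradicting reducedness. If you want to push toward $2p+C$, you would need either to strengthen that polynomial-method count or to supply the missing inverse-type lemma in your stabilizer stratification; as it stands, neither is done.
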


    We believe that the proof methods in this paper could be extended to give bounds on $f(G)$ for other finitely generated abelian groups $G$. It is also a natural interesting question in its own right to consider the problem of bounding the size of a largest reduced multiset in such a $G$. 

	\subsection*{Acknowledgements}

		We are very grateful to Lisa Sauermann for coming up with \Cref{thm:reduced-upper} and for sharing her solution with us; this allowed us to both shorten our paper and improve our results. We would also like to thank Noga Alon for interesting discussions regarding reduced sets and for bringing \cite{alon1991additive} to our attention.
		These discussions took place during the workshop `Recent advances in probabilistic and extremal combinatorics' which was held in Ascona, Switzerland on 7--12 August 2022. The first author would like to thank Asaf Shapira and Benny Sudakov, the workshop's organisers, for inviting her and giving her the opportunity to give a talk. We would also like to thank the referees for their helpful comments.

\bibliography{zero-cycle}
\bibliographystyle{amsplain}

\appendix

\section{Proof of Lemma~\ref{lem:reduced-upper-2}}\label{sec:appendix-a}

	The next lemma is a rephrasing of Corollary 3.4 in \cite{alon1991additive}. We give a similar proof here.

	\begin{lemma}[Corollary 3.4 in \cite{alon1991additive}, rephrased] \label{lem:polynomial}
		Let $v_i = (v_i(1), \ldots, v_i(k))$ be a sequence of $k(p-1)$ vectors in $\Z_p^k$.
		Suppose that 
		\begin{equation}\label{eq:coeff-cond}
			\sum_{(I_1, \ldots, I_k) \in \cI} \,\, \prod_{i \in [k]} \prod_{j \in I_i} v_j(i) \neq 0,
		\end{equation}
		where $\cI$ is the collection of equipartitions of $[k(p-1)]$. 
		Then $\Sigma(\{v_1, \ldots, v_{k(p-1)}\}) = \Z_p^k$.
	\end{lemma}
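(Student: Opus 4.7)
I would prove this by the polynomial method, applying the Combinatorial Nullstellensatz to a polynomial that encodes whether a prescribed target $a \in \Z_p^k$ is realized as a subset sum of the $v_j$'s, and using Wilson's theorem to identify the top coefficient with the expression in \eqref{eq:coeff-cond}.

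Fix $a \in \Z_p^k$ and write $N := k(p-1)$. A subset $T \subseteq [N]$ with $\sum_{j \in T} v_j = a$ corresponds to an assignment $x \in \{0,1\}^N$ satisfying the $k$ coordinate equations $\sum_{j=1}^N x_j v_j(i) = a(i)$, $i \in [k]$. I would introduce the auxiliary polynomial
\begin{equation*}
Q(x_1, \ldots, x_N) := -\prod_{i=1}^k \left[\,1 - \Bigl(\sum_{j=1}^N x_j v_j(i) - a(i)\Bigr)^{p-1}\,\right] \in \F_p[x_1, \ldots, x_N].
\end{equation*}
By Fermat's little theorem, for $x \in \{0,1\}^N$ one has $Q(x) = -1$ when $\sum_j x_j v_j = a$ and $Q(x) = 0$ otherwise. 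So the task reduces to showing that $Q$ does not vanish identically on $\{0,1\}^N$; applying this for each $a \in \Z_p^k$ then yields $\Sigma(\{v_1, \ldots, v_N\}) = \Z_p^k$.

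I would then apply the Combinatorial Nullstellensatz with the box $\{0,1\}^N$ (each coordinate degree bound is $t_j = 1$): it suffices to exhibit a nonzero coefficient on the multilinear top monomial $x_1 x_2 \cdots x_N$, which has degree exactly $N = \deg Q$. Only the term of $Q$ where all $k$ brackets contribute their $(\sum_j x_j v_j(i) - a(i))^{p-1}$ piece — namely $-(-1)^k \prod_{i=1}^k (\sum_j x_j v_j(i))^{p-1}$ — has degree high enough to produce an $x_1 \cdots x_N$ monomial (the $-a(i)$ additive constants only drop degree). A routine multinomial expansion shows the $x_1 \cdots x_N$ coefficient of $\prod_i (\sum_j x_j v_j(i))^{p-1}$ equals $((p-1)!)^k$ times the sum in \eqref{eq:coeff-cond}. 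Wilson's theorem $(p-1)! \equiv -1 \pmod p$ turns the overall prefactor into a nonzero unit of $\F_p$, so the top coefficient of $Q$ is nonzero precisely because of the hypothesis.

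The only step with any real combinatorial content is matching the multilinear coefficient with the equipartition sum, and this is essentially forced: extracting $x_1 \cdots x_N$ from $\prod_i (\sum_j x_j v_j(i))^{p-1}$ amounts to choosing, for each factor $i$, exponents $e_{i,j} \in \{0, 1, \ldots, p-1\}$ summing to $p-1$, subject to $\sum_i e_{i,j} = 1$ for every $j \in [N]$; the constraint forces every $e_{i,j} \in \{0,1\}$, and the data $I_i := \{j : e_{i,j} = 1\}$ is exactly an equipartition in $\cI$, contributing multinomial coefficient $(p-1)!$ per factor. Beyond keeping track of the signs, I do not anticipate a serious obstacle — the argument is the standard Alon--Linial--Meshulam polynomial-method strategy from~\cite{alon1991additive}.
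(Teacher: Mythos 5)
Your proposal is correct and follows essentially the same route as the paper: the same Nullstellensatz application to the polynomial $\prod_{i}\bigl(\bigl(\sum_j x_j v_j(i) - a(i)\bigr)^{p-1}-1\bigr)$ (yours differs only by a global sign), the same reduction via Fermat's little theorem, and the same identification of the $x_1\cdots x_N$ coefficient with $\bigl((p-1)!\bigr)^k$ times the equipartition sum. The only cosmetic difference is that you invoke Wilson's theorem where the paper simply notes $(p-1)!\neq 0$ in $\F_p$ since $p$ is prime.
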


    	Our proof uses Alon's Combinatorial Nullstellensatz.

	\begin{theorem}[Alon's Combinatorial Nullstellensatz~\cite{alonnul}]\label{thm:alon}
		Let $f \in \mathbb{F}[x_1,\ldots,x_n]$ be a polynomial with coefficients in a field $\mathbb{F}$ such that the degree of $f$ is $\sum_{i=1}^n t_i$ and the coefficient of $\prod_{i=1}^n x_i^{t_i}$ is non-zero. Let $S_1,\ldots,S_n$ be subsets of $\mathbb{F}$ such that $|S_i| > t_i$ for all $i$. Then there exists  $(s_1,\ldots,s_n) \in S_1 \times \ldots \times S_n$ such that $f(s_1,\ldots,s_n) \not= 0$. 
	\end{theorem}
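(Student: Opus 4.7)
The plan is to prove the statement in two independent steps. I would first establish a \emph{vanishing lemma}: any polynomial $P \in \mathbb{F}[x_1, \ldots, x_n]$ with $\deg_{x_i} P \le t_i$ for each $i$ which vanishes on $S_1 \times \cdots \times S_n$ (with $|S_i| > t_i$) must be identically zero. Then I would perform a \emph{reduction} step: given $f$ as in the hypothesis, construct a polynomial $\tilde{f}$ with $\deg_{x_i} \tilde{f} \le t_i$ for every $i$, which agrees with $f$ on $S_1 \times \cdots \times S_n$, and whose coefficient of $\prod_i x_i^{t_i}$ equals that of $f$. Combining these, $\tilde{f}$ is a nonzero polynomial (its leading coefficient is nonzero by hypothesis), so by the vanishing lemma it does not vanish on all of $S_1 \times \cdots \times S_n$; since $f$ and $\tilde{f}$ agree on that set, neither does $f$.

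For the vanishing lemma I would induct on $n$. The base case $n = 1$ is immediate: a single-variable polynomial of degree at most $t_1$ with more than $t_1$ roots in a field is zero. For the inductive step, write
\begin{equation*}
P(x_1, \ldots, x_n) = \sum_{j = 0}^{t_n} P_j(x_1, \ldots, x_{n-1}) \cdot x_n^j,
\end{equation*}
where each $P_j$ has degree at most $t_i$ in $x_i$ for $i < n$. For any fixed tuple $(s_1, \ldots, s_{n-1}) \in S_1 \times \cdots \times S_{n-1}$, the univariate polynomial $P(s_1, \ldots, s_{n-1}, x_n)$ has degree at most $t_n$ and vanishes on all of $S_n$, so by the base case it is identically zero, giving $P_j(s_1, \ldots, s_{n-1}) = 0$ for all $j$. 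Since this holds for every choice of $(s_1, \ldots, s_{n-1})$, the inductive hypothesis applied to each $P_j$ yields $P_j \equiv 0$, and hence $P \equiv 0$.

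For the reduction, for each $i$ fix a subset $S_i' \subseteq S_i$ with $|S_i'| = t_i + 1$ and define $g_i(x_i) := \prod_{s \in S_i'}(x_i - s)$, a monic polynomial of degree $t_i + 1$ vanishing on $S_i'$. Writing $g_i(x_i) = x_i^{t_i+1} - h_i(x_i)$ with $\deg h_i \le t_i$, we obtain the identity $x_i^{t_i+1} = h_i(x_i)$ on $S_i'$. Iteratively replacing any occurrence of $x_i^{t_i + a}$ (with $a \ge 1$) inside a monomial of $f$ by $x_i^{a-1} h_i(x_i)$ and re-expanding produces a polynomial $\tilde{f}$ with $\deg_{x_i} \tilde{f} \le t_i$ that agrees with $f$ on $S_1' \times \cdots \times S_n' \subseteq S_1 \times \cdots \times S_n$. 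The crucial invariant is that each substitution strictly decreases the total degree of the monomial to which it is applied (it turns $x_i^{t_i + a}$, of degree $t_i + a$, into a polynomial of degree at most $t_i + a - 1$ in $x_i$ alone, contributing no new cross terms in other variables). Since $\deg f = \sum_i t_i$, the monomial $\prod_i x_i^{t_i}$ already has the maximal possible total degree, so no reduction step can contribute to its coefficient; thus that coefficient in $\tilde{f}$ matches that in $f$ and is nonzero.

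The main delicate point is the coefficient preservation in the reduction. The critical fact is that $\prod_i x_i^{t_i}$ is the unique monomial of total degree $\sum_i t_i$ with $\deg_{x_i} = t_i$ for every $i$, and every substitution step strictly drops total degree, so no spurious contribution can land on that leading monomial. Once this bookkeeping is pinned down, the vanishing lemma finishes the argument: $\tilde{f}$ has the correct degree restrictions but is not identically zero, so it cannot vanish on $S_1' \times \cdots \times S_n'$, giving a point of $S_1 \times \cdots \times S_n$ at which $f$ is nonzero.
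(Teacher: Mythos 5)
Your proof is correct: the vanishing lemma by induction on the number of variables, followed by reduction of $f$ modulo the polynomials $g_i(x_i)=\prod_{s\in S_i'}(x_i-s)$ and the observation that each reduction step strictly lowers total degree (so the coefficient of $\prod_i x_i^{t_i}$, which sits at the maximal total degree $\sum_i t_i$, is untouched), is exactly the standard argument, essentially Alon's original proof. The paper itself does not prove this theorem; it cites it as a known result (\cite{alonnul}) and uses it as a black box, so there is nothing in the paper to compare against beyond noting that your argument is the canonical one and is complete.
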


	\begin{proof}[Proof of \Cref{lem:polynomial}]
		Fix $w = (w(1), \ldots, w(k)) \in \Z_p^k$. Set $m := k(p-1)$ and define
		\begin{equation} \label{eqn:polynomial}
			P(x_1,\ldots,x_m) := 
			\prod_{i=1}^{k}\left(\bigg(\sum_{j \in [m]}x_jv_j(i) - w(i)\bigg)^{p-1} - 1\right).
		\end{equation}
		Let $S$ be the multiset $\bigmcup_{i \in [m]}v_i$.
		Observe that $P(y) \neq 0$ for some $y \in \{0,1\}^m$ if and only if $w \in \Sigma(S)$ (via Fermat's little theorem). 

		Thus, applying Theorem~\ref{thm:alon}, with $S_i = \{0,1\}$ and $t_i = 1$ for every $i \in [m]$, we see that if the coefficient of $\prod_{i \in [m]}x_i$ in $P$ is non-zero then $w \in \Sigma(S)$. This coefficient is the coefficient of $\prod_{i \in [m]}x_i$ in $\prod_{i=1}^{k}\left(\sum_{j \in [m]} x_jv_j(i) \right)^{p-1}$. In order to obtain a term of $\prod_{i \in [m]}x_i$ from the latter product, from each factor we must select a distinct variable $x_j$ in such a way that every $j \in [m]$ appears exactly once. 
		Recalling that $\cI$ is the collection of equipartitions $(I_1, \ldots, I_k)$ of $[m]$ and thinking of $I_i$ as indexing the variables obtained from the $i$th factor of the product, this coefficient is
		\begin{equation*}
			\big((p-1)!\big)^k\sum_{(I_1,\ldots,I_k) \in \mathcal{I}}\,\, \prod_{i \in [k]} \,\prod_{j \in I_i}v_j(i).
		\end{equation*}
		As $p$ is prime, $(p-1)! \neq 0$, and \eqref{eq:coeff-cond} holds if and only if the coefficient of $\prod_{i \in [m]}x_i$ in $P$ is non-zero, which implies that $w \in \Sigma(S)$. Since $w$ was an arbitrary element in $\Z_p^k$, this shows that \eqref{eqn:polynomial} implies $\Sigma(S) = \Z_p^k$.
	\end{proof}

	We now prove \Cref{lem:reduced-upper-2}, restated here.
	\lemReducedUpper*

	\begin{proof}[Proof of \Cref{lem:reduced-upper-2}]
		Suppose that $T \msubseteq \mathbb{Z}_p^2$ is reduced and $|T| \ge \frac{5}{2}(p-1)$. Let $S \subseteq T$ satisfy $|S| = \frac{5}{2}(p-1)$.  By \Cref{obs:reduced}~\ref{itm:reduced-subset}, $S$ is reduced. By  \Cref{obs:reduced}~\ref{itm:identity}, $(0,0) \notin S$. We will show that $\Sigma(S) = \Z_p^2$, which is a contradiction as this implies $T$ is not reduced. 
		
		For $v \in \mathbb{Z}_p^2 \setminus \{(0,0)\}$, define $\angle{v} := \{\alpha v : \alpha \in \Z_p\}$. 
		Choose $v$ to minimise the size of the multiset intersection $\angle{v} \mcap S$, defined to be the multisubset of $S$ whose elements lie in $\angle{v}$. 
		Notice that there are $p+1$ different `directions', namely $(1, i)$ for $i \in [0,p-1]$ and $(0, 1)$, and each $v \in \mathbb{Z}_p^k$ lies in $\angle{d}$ for exactly one direction $d$. 
		Hence, as $|S| = \frac{5}{2}(p-1) < 3(p+1)$, we have $|\angle{v} \mcap S| \le 2$. 
		Let $f : \Z_p^2 \to \Z_p^2$ be an injective linear transformation mapping $v$ to $(0,1)$. Then, by \Cref{obs:reduced}~\ref{itm:reduced-map}, $f(S)$ is a reduced multisubset of $\Z_p^2$ of size $\frac{5}{2}(p-1)$ with the property that $f(S)$ contains at most two vectors in direction $(0,1)$. That is, the multiset intersection $f(S) \mcap \angle{(0,1)}$ has size at most 2.

		Let $S_0$ be obtained from $S$ by removing all elements in $\angle{(0,1)}$, and observe that $|S_0| \ge \frac{5}{2}(p-1) - 2 > p-1$. Let $v_1, \ldots, v_{p-1}$ be chosen as follows,
		defining $S_i := S_0 - \{v_1, \ldots, v_i\}$ once $v_1, \ldots, v_i$ are defined. 
		Having chosen $v_1, \ldots, v_{i-1}$, choose $v_i \in S_{i-1}$ to maximise $|\angle{v_i} \mcap S_{i-1}|$.

		Write $S' := S - \{v_1, \ldots, v_{p-1}\}$ and define
		\begin{equation} \label{eqn:max}
			m := \max_{v \in \Z_p^2 \setminus \{(0,0)\}} |S' \mcap \angle{v}|.
		\end{equation}
		\begin{claim}\label{cl:m-bound}
			$m \le (p-1)/2$.
		\end{claim}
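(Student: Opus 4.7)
I would argue by contradiction. Suppose $m \ge (p+1)/2$. Since $|S' \mcap \angle{(0,1)}| = |S \mcap \angle{(0,1)}| \le 2 < (p+1)/2$ for $p \ge 7$, the maximum must be attained on some direction $d^* \ne (0,1)$; moreover $S' \mcap \angle{d^*} = S_{p-1} \mcap \angle{d^*}$ since none of the $v_i$ lie in $\angle{(0,1)}$, so writing $M := m$ gives $|S_{p-1} \mcap \angle{d^*}| = M \ge (p+1)/2$. The first step would be to extract a consequence of reducedness: every direction satisfies $|S \mcap \angle{d}| \le p - 1$. Otherwise some direction contains $\ge p$ elements of $S$; then iterated Cauchy--Davenport gives $\Sigma(S \mcap \angle{d}) = \angle{d}$, and since $p - 1$ nonzero elements of $\angle{d}$ already achieve this full sumset, removing a single element of $S \mcap \angle{d}$ would not shrink $\Sigma(S)$, contradicting reducedness. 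Setting $N_d := |S_0 \mcap \angle{d}|$, this yields $N_d \le p - 1$.

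Next I would analyse the greedy construction. The maxima $M_{i-1} := \max_d |S_{i-1} \mcap \angle{d}|$ form a non-increasing sequence ending at $M$, so every pick is made from a direction of current count $\ge M$, and thus leaves that direction with count $\ge M - 1$. It follows that every picked direction $d$ ends with final count in $\{M-1, M\}$ and obeys $t_d \le N_d - M + 1 \le p - M = (p-1)/2$, while every unpicked direction obeys $N_d \le M$ (otherwise it would be the max at some step). Setting $T := \{d \ne (0,1) : N_d \ge M - 1\}$, this is precisely the set of directions with final count $\ge M - 1$, and it contains every picked direction. Since at least one direction in $T$ reaches final count $M$, we have $|S_{p-1}| \ge (M - 1)|T| + 1$, while the trivial upper bound is $|S_{p-1}| \le |S| - (p - 1) = \tfrac{3(p-1)}{2}$.

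The proof then concludes with a short case split on $|T|$. If $|T| \ge 3$, the two bounds collide since $3(M-1) + 1 > \tfrac{3(p-1)}{2}$ whenever $M \ge (p+1)/2$. In the case $|T| \le 2$, every pick comes from $T$ and each direction contributes at most $(p-1)/2$ picks, so fitting $p - 1$ picks forces $|T| = 2$ with both directions picked exactly $(p-1)/2$ times from $N_d = p - 1$; but then both end at $(p-1)/2 = M - 1$, leaving no direction at final count $M$ and contradicting the choice of $M$. The subcases $|T| \le 1$ leave strictly fewer picks available and fail immediately. The main obstacle I expect is the tight $|T| = 2$ case, where the greedy bound $t_d \le N_d - M + 1$ and the reducedness bound $N_d \le p - 1$ must combine so tightly that the $p - 1$ picks cannot be allocated while preserving any direction at final count $M$; the other cases reduce to one-line inequalities.
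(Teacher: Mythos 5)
Your proof is correct and follows essentially the same approach as the paper: analyse the greedy choice of $v_1,\ldots,v_{p-1}$ to show every picked direction ends with count in $\{m-1,m\}$, use reducedness to bound each direction's contribution to $S$ by $p-1$, and split on whether the number of relevant directions is at most $2$ or at least $3$. If anything, your treatment of the tight $|T|=2$ boundary case (where you rule out both picked directions ending at $M-1$ by contradicting the existence of a direction of final count $M$) is more explicit than the paper's corresponding inequality.
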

		\begin{proof}
			Let $T$ be the set of elements $t \in \Z_p$ such that $\angle{(1,t)} \cap \{v_1, \ldots, v_{p-1}\} \neq \emptyset$. Then, by definition of $v_1, \ldots, v_{p-1}$, we have $|S' \mcap \angle{(1,t)}| \in \{m-1, m\}$ for every $t \in T$. 

			If $|T| \le 2$ then 
			\begin{equation*}
				2(p-1) \ge |S \mcap \bigcup_{t \in T} \angle{(1,t)}| \ge p-1 + m + m - 1,
			\end{equation*}
			using that $|S \mcap \angle{v}| \le p-1$ for $v \in \Z_p^2 \setminus \{(0,0)\}$ (which follows from \Cref{obs:reduced}~\ref{itm:reduced-dim-1}). This implies that $m \le p/2$, and thus $m \le (p-1)/2$ because $p$ is odd.

			If $|T| \ge 3$ then 
			\begin{equation*}
				\frac{5}{2}(p-1) \ge \left|S \cap \bigmcup_{t \in T} \angle{(1,t)}\right| \ge p-1 + m + (|T|-1)(m-1) \ge p + 3m - 3.
			\end{equation*}
			Thus $m \le p/2 + 1/6$, implying that $m \le (p-1)/2$, as $p$ is odd. This completes the proof of Claim~\ref{cl:m-bound}. 
		\end{proof}

		\begin{claim}\label{cl:non-zero}
			For every $t \in [0, p-1]$ there exists a multisubset $\bigmcup_{i \in [p-1+t]} v_i \subseteq S'$ such that
			\begin{equation} \label{eqn:polynomial-dim-2}
				\sum_{(I_1, I_2) \in \cI_t} \,\, \prod_{i \in [2]} \prod_{j \in I_i} v_j(i) \neq 0,
			\end{equation}
			where $\cI_t$ is the collection of partitions $(I_1, I_2)$ of $[p-1+t]$ such that $|I_1| = p-1$.
		\end{claim}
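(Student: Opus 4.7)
The plan is to reinterpret the sum in \eqref{eqn:polynomial-dim-2} as a coefficient of a bivariate polynomial and to build the multiset $v_1, \ldots, v_{p-1+t}$ by induction on $t$. Setting $F(x,y) := \prod_{j=1}^{p-1+t}\bigl(v_j(1)\,x + v_j(2)\,y\bigr)$, one checks (by expanding the product and choosing the $x$-term from the factors indexed by $I_1$ and the $y$-term from those indexed by $I_2$) that the left-hand side of \eqref{eqn:polynomial-dim-2} is exactly the coefficient of $x^{p-1} y^t$ in $F$. Hence it suffices to produce $v_1, \ldots, v_{p-1+t}$ for which this coefficient is nonzero.

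For the base case $t = 0$, the sum collapses to $\prod_{j=1}^{p-1} v_j(1)$, which is nonzero precisely when no $v_j$ lies in direction $\angle{(0,1)}$. The greedy vectors $v_1, \ldots, v_{p-1} \in S_0$ already chosen in the proof satisfy this by construction, since $S_0$ was obtained by deleting $\angle{(0,1)} \mcap S$ from $S$.

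For the inductive step $t \to t+1$, I write $c_t$ and $c'_{t+1}$ for the coefficients of $x^{p-1}y^t$ and $x^{p-2}y^{t+1}$ in $F$; by the induction hypothesis, $c_t \neq 0$. Appending a new vector $v = (a, b)$ to the multiset multiplies $F$ by the linear form $a x + b y$, so the coefficient of $x^{p-1}y^{t+1}$ in the extended product becomes $a\,c'_{t+1} + b\,c_t$. Since $c_t \neq 0$, this is a nontrivial linear form in $(a, b)$, whose zero set is a single line through the origin, i.e.\ a single direction $d^\ast \subseteq \Z_p^2$. It therefore suffices to find an unused $v \in S'$ lying outside direction $d^\ast$.

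The hard part is verifying such a $v$ always exists at every step. By \Cref{cl:m-bound}, at most $(p-1)/2$ elements of $S'$ lie in $d^\ast$, and at most $t$ vectors of $S'$ have been chosen in the previous inductive steps. Since $|S'| = \tfrac{3}{2}(p-1)$, the number of admissible candidates is at least $|S'| - t - \tfrac{p-1}{2} = (p-1) - t$, which is at least $1$ for all $t \le p - 2$. This completes the induction and establishes the claim for every $t \in [0, p-1]$.
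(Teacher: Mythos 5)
Your proposal is correct and follows essentially the same route as the paper: induction on $t$, with the base case handled by the greedy vectors avoiding $\angle{(0,1)}$, and the inductive step observing that the target quantity is a nonzero linear form in the new vector $(a,b)$ whose zero set is a single direction, which can be avoided thanks to Claim~\ref{cl:m-bound}. Your generating-function framing (coefficient of $x^{p-1}y^t$ in $\prod_j(v_j(1)x+v_j(2)y)$) is just a cleaner packaging of the paper's expansion $xs_1+ys_2$, and your count of admissible candidates matches the paper's bound $|S \mminus \{v_1,\ldots,v_{p-2+t}\}| \ge (p+1)/2 > m$.
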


		\begin{proof}
			We prove the claim by induction on $t$. When $t = 0$, the expression in \eqref{eqn:polynomial-dim-2} is $\prod_{j \in I_1} v_j(1)$, which is non-zero as $v_1, \ldots, v_{p-1} \notin \angle{(0,1)}$.
			 
			Now suppose that $t \in [p-1]$ and $v_1, \ldots, v_{p-1+t-1}$ satisfy the requirements of the claim for $t-1$. Expanding the left-hand side of \eqref{eqn:polynomial-dim-2}, with $v_{p-1+t} = (x,y)$, gives
			\begin{equation*}
				x s_1 + y s_2,
			\end{equation*}
			where $s_1$ is a sum of terms depending on $v_1, \ldots, v_{p-1+t-1}$ and 
			\begin{equation*}
				s_2 = \sum_{(I_1, I_2) \in \cI_{t-1}} \,\, \prod_{i \in [2]} \prod_{j \in I_i} v_j(i) \neq 0.
			\end{equation*}
			The multisubset of $S \mminus \{v_1, \ldots, v_{p-1+t-1}\}$ of vectors $(x,y)$ satisfying $xs_1 + ys_2 = 0$ is contained in a subspace of dimension $1$. Because $m \le (p-1)/2$ (by Claim~\ref{cl:m-bound}) and $|S \mminus \{v_1, \ldots, v_{p-1+t-1}\}| \ge (p+1)/2$, we find that there is a suitable $v_{p-1+t} \in S \mminus \{v_1, \ldots, v_{p-1+t-1}\}$.
		\end{proof}
		
		\Cref{lem:polynomial} and \Cref{cl:non-zero} (with $t = p-1$) imply that there is a multisubset $S' \subseteq S$ of size $2(p-1)$ such that $\Sigma(S') = \Z_p^2$. As $S' \not= S$, this contradicts the assumption that $S$ is reduced. 
	\end{proof}

\end{document}